\colorlet{Accent}{Black}
\DeclareTextFontCommand{\textbf}{\color{Accent}\sffamily\bfseries}
\let\oldbfseries\bfseries
\renewcommand{\bfseries}{\sffamily\color{Accent}\oldbfseries}
\setlist[enumerate]{%
  font=\sffamily\bfseries\color{Accent},   
  label=\arabic*.                        
}
\let\old@maketitle\@maketitle
\renewcommand{\@maketitle}{%
  {\sffamily\color{Accent}%
    \newpage
    \null\vskip 2em
    \begin{center}%
      {\LARGE \bfseries \@title \par}%
      \vskip 1.5em%
      {\large
        \lineskip .5em%
        \begin{tabular}[t]{c}%
          \@author
        \end{tabular}\par}%
      \vskip 1em%
      {\large \@date \par}%
    \end{center}%
    \par\vskip 1.5em}%
}
\sffamily\color{Accent}\itshape}{\thesubsubsection}{1em}{}
\renewcommand{\tagform@}[1]{%
  \maketag@@@{\color{Accent}\sffamily(#1)}%
}
\renewcommand{\headrulewidth}{0.5pt} 
\renewcommand{\headrule}{\hbox to\headwidth{\leaders\hrule height \headrulewidth\hfill}} 
\renewcommand{\headrule}{%
  \vspace{-2.0ex} 
  \hbox to\headwidth{\leaders\hrule height \headrulewidth\hfill}%
  \vspace{-0.5ex} 
}
\tikzset{every picture/.style={line width=0.75pt}} 
\title{A Sequential Planning Framework for the Operational Reality of Interacting Air Traffic Flow Regulations and Traffic Flow Programs}
\author{\textbf{Thinh Hoang, Daniel Delahaye}\\
École Nationale de l'Aviation Civile\\
Toulouse, France
}
\date{\today}
\theoremstyle{plain}
\newtheorem{proposition}{Proposition}
\theoremstyle{definition}
\newcommand{\kl}[2]{D_{\mathrm{KL}}(#1 \parallel #2)}
\newcommand{\expect}[2]{\mathbb{E}_{#1}\left[#2\right]}
\newcommand{\CC}{\mathcal{C}}
\begin{document}

\maketitle

\begin{abstract}
	Air Traffic Flow Management (ATFM) traffic regulations are being increasingly used as rising demand meets persistent workforce shortages. This operational strain has amplified a critical phenomenon that we call \emph{regulation cascading}: the compounding, non-linear interactions that occur when multiple regulations influence one another in unpredictable ways. As the number and complexity of regulations grow, cascading effects become more pronounced, undermining the network operator's ability to protect sectors reliably.
	
	To address this challenge, we introduce RegulationZero, a sequential planning framework that natively operates in the regulation space, optimizing over ordered sequences of flow-level regulations that remain fully compatible with existing slot-allocation systems such as CASA and RBS++. At its core, the method employs a hierarchical Monte Carlo Tree Search (MCTS) that first samples congestion hotspots and then selects candidate regulations synthesized by a local proposal engine. Each proposal is evaluated by a fast First-Planned-First-Served (FPFS) allocator to estimate its reward, with these feedbacks guiding the subsequent MCTS exploration.
	
	Experiments on pan-European summer-peak traffic confirm that RegulationZero delivers promising and consistent performance. Compared to a simulated-annealing baseline operating in trajectory space, it achieves markedly higher objective improvements, while delaying fewer flights at shorter horizons and maintaining near-parity efficiency in benefit per delayed flight over longer periods. The framework generalizes well across traffic days with diverse congestion patterns and consistently outperforms a greedy rate-capping policy, which shows clear manifestation of regulation cascading effects. Importantly, RegulationZero preserves FPFS fairness and supports expert knowledge injection, offering a pragmatic and low-disruption pathway toward automation in operations.
\end{abstract}

\section{Introduction}
\subsection{The lagging of current  Traffic Flow Management measures behind operational realities.}
The recent surge in air traffic demand has placed immense strain on the current Air Traffic Flow Management (ATFM) system. In Europe, flight volumes increased by 9.8\% to reach 10.2 million flights in 2023, representing approximately 93\% of 2019 levels, with some regions even surpassing pre-pandemic volumes in 2024 \cite{eurocontrol2024network,eurocontrol2023trends}. Across the Atlantic, the U.S. FAA reported that the National Airspace System handled a record 16.3 million flights in 2023 \cite{whitaker2024state}.

Beyond rising demand, workforce constraints have recently emerged as a critical bottleneck, particularly in Europe and the United States. 
Approximately 3 million minutes of en-route ATFM delay in 2023 were attributed solely to controller strikes \cite{eurocontrol2024prr}. In the U.S., FAA data indicate that most air traffic facilities remain understaffed, with the National Airspace System operating at only 72\% of the required number of fully certified controllers \cite{kelly2025understaffed}. 

With demand continuing to rise and available capacity lagging behind, the pressure on ATFM positions intensified. 
The above statistics showed that existing capacity management strategies are struggling to match demand, therefore calling for a better ATM solution today, not in the future.
\subsection{The Challenges of Regulation Planning \& Regulation Cascading}
\subsubsection{The Operation of Delay Programs}
The current DCB tasks are chiefly realized world-wide by the slot allocation algorithms, such as Computer Assisted Slot Allocation (CASA) algorithm by Eurocontrol in Europe, or Ration-by-Schedule and Compression (RBS++) in the U.S.

In the European context, FMPs continuously monitor demand and capacity using Traffic Volumes (TFVs) built around a reference location and associated flows, and provide the NM with monitoring values, TFV/flow definitions and regulation proposals. Once NM agrees, it activates a regulation that explicitly contains the validity window, a description of the location/reference, and the entering flow rate. When a regulation is activated, NM’s CASA builds a slot list at the declared rate and allocates CTOTs to the affected flights on a First-Planned-First-Served (FPFS) basis (Figure \ref{fig:ectl_workflow}) \cite{Eurocontrol_ATFCM_2018}. Similarly, in the U.S., ATCSCC sets the program’s time window, rate and flight scope/filters, and TFMS applies RBS++ to allocate take-off slots/EDCTs \cite{FAA_FSM_Users_Guide_2016}.

\begin{figure}
    \centering

    \begin{tikzpicture}[x=0.75pt,y=0.75pt,yscale=-1,xscale=1]

\draw   (130,250) -- (190,250) -- (190,292) -- (130,292) -- cycle ;
\draw   (130,130) -- (348,130) -- (348,172) -- (130,172) -- cycle ;
\draw   (210,250) -- (270,250) -- (270,292) -- (210,292) -- cycle ;
\draw   (290,250) -- (350,250) -- (350,292) -- (290,292) -- cycle ;
\draw    (160,250) -- (236.6,171.43) ;
\draw [shift={(238,170)}, rotate = 134.27] [color={rgb, 255:red, 0; green, 0; blue, 0 }  ][line width=0.75]    (10.93,-3.29) .. controls (6.95,-1.4) and (3.31,-0.3) .. (0,0) .. controls (3.31,0.3) and (6.95,1.4) .. (10.93,3.29)   ;
\draw    (240,250) -- (238.05,172) ;
\draw [shift={(238,170)}, rotate = 88.57] [color={rgb, 255:red, 0; green, 0; blue, 0 }  ][line width=0.75]    (10.93,-3.29) .. controls (6.95,-1.4) and (3.31,-0.3) .. (0,0) .. controls (3.31,0.3) and (6.95,1.4) .. (10.93,3.29)   ;
\draw    (328,250) -- (239.49,171.33) ;
\draw [shift={(238,170)}, rotate = 41.63] [color={rgb, 255:red, 0; green, 0; blue, 0 }  ][line width=0.75]    (10.93,-3.29) .. controls (6.95,-1.4) and (3.31,-0.3) .. (0,0) .. controls (3.31,0.3) and (6.95,1.4) .. (10.93,3.29)   ;
\draw   (132,30) -- (350,30) -- (350,72) -- (132,72) -- cycle ;
\draw   (60,20) -- (380,20) -- (380,180) -- (60,180) -- cycle ;
\draw   (70,240) -- (380,240) -- (380,310) -- (70,310) -- cycle ;
\draw   (390,138) -- (500,138) -- (500,180) -- (390,180) -- cycle ;
\draw   (510,138) -- (620,138) -- (620,180) -- (510,180) -- cycle ;
\draw    (450,50) -- (450,138) ;
\draw [shift={(450,140)}, rotate = 270] [color={rgb, 255:red, 0; green, 0; blue, 0 }  ][line width=0.75]    (10.93,-3.29) .. controls (6.95,-1.4) and (3.31,-0.3) .. (0,0) .. controls (3.31,0.3) and (6.95,1.4) .. (10.93,3.29)   ;
\draw    (570,50) -- (570,138) ;
\draw [shift={(570,140)}, rotate = 270] [color={rgb, 255:red, 0; green, 0; blue, 0 }  ][line width=0.75]    (10.93,-3.29) .. controls (6.95,-1.4) and (3.31,-0.3) .. (0,0) .. controls (3.31,0.3) and (6.95,1.4) .. (10.93,3.29)   ;
\draw    (380,50) -- (570,50) ;
\draw    (240,130) -- (240,72) ;
\draw [shift={(240,70)}, rotate = 90] [color={rgb, 255:red, 0; green, 0; blue, 0 }  ][line width=0.75]    (10.93,-3.29) .. controls (6.95,-1.4) and (3.31,-0.3) .. (0,0) .. controls (3.31,0.3) and (6.95,1.4) .. (10.93,3.29)   ;

\draw (135,262) node [anchor=north west][inner sep=0.75pt]   [align=left] {FMP 1};
\draw (169,142) node [anchor=north west][inner sep=0.75pt]   [align=left] {Network Manager};
\draw (215,262) node [anchor=north west][inner sep=0.75pt]   [align=left] {FMP 2};
\draw (295,262) node [anchor=north west][inner sep=0.75pt]   [align=left] {FMP 3};
\draw (96,201) node [anchor=north west][inner sep=0.75pt]   [align=left] {Regulations};
\draw (190,42) node [anchor=north west][inner sep=0.75pt]   [align=left] {CASA Algorithm};
\draw (71,160) node [anchor=north west][inner sep=0.75pt]  [rotate=-270] [align=left] {EUROCONTROL};
\draw (73,298) node [anchor=north west][inner sep=0.75pt]  [rotate=-270] [align=left] {ACCs\\Airports};
\draw (481,62) node [anchor=north west][inner sep=0.75pt]   [align=left] {Delays};
\draw (418,151) node [anchor=north west][inner sep=0.75pt]   [align=left] {Flight 1};
\draw (538,151) node [anchor=north west][inner sep=0.75pt]   [align=left] {Flight 2};

\end{tikzpicture}
    
    \caption{The general Flow Regulation Workflow in the European Context.}
    \label{fig:ectl_workflow}
\end{figure}
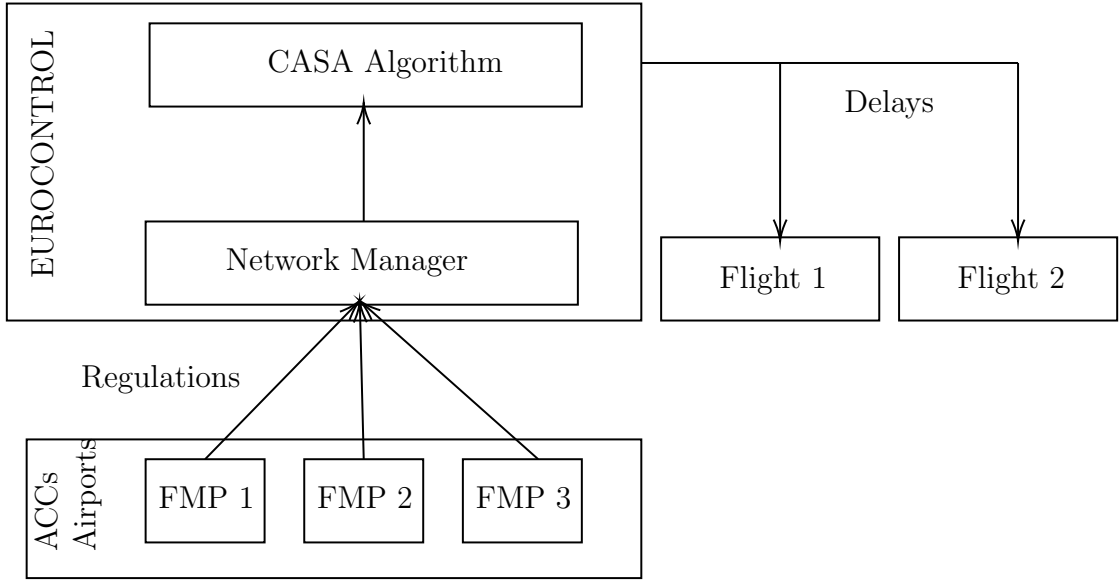

Designing effective flow management regulations is a complex, experience-driven task, especially under uncertain conditions like adverse weather. FMP staff must anticipate and mitigate demand-capacity imbalances hours in advance, often based on imperfect forecasts. In practice this relies heavily on veteran traffic managers’ expertise to interpret evolving situations and coordinate interventions. In 2023, Eurocontrol noted a sharp increase in short-notice adjustments by FMPs, with the frequency of rapid regulation updates doubling compared to 2022. These frequent, reactive changes rendered the daily ATFM plan highly dynamic, demanding constant human oversight and introducing greater unpredictability for airlines.

\subsubsection{The Complex Interactions between Regulations}

Even with perfect traffic forecasts, the fragmented nature of regulation proposals by individual FMP positions at each ACC could result in a set of regulations that may conflict with each other, leading to degraded efficiency and increase Collaborative Decision Making (CDM) efforts. For instance, during periods of convective weather over Central Europe, adjacent FMPs such as those in Munich, Vienna, and Zurich may independently impose flow regulations to protect their sectors. When implemented without cross-border coordination, these local measures can create overlapping or contradictory restrictions along shared traffic flows: one regulation may cancel another's flows, leading to unpredictable effects and increased workload for the Network Manager to reconcile competing constraints in real time.

In this paper, we refer to this phenomenon as \textit{regulation cascading} (RC), which is one of the prominent sources of unpredictability in the network-level regulation planning. Addressing this challenge requires reframing the ATFM DCB problem within a conceptual space distinct from conventional flight-centric paradigms like Trajectory-Based Operations (TBO). We refer to these alternative domains as the regulation space and the trajectory space, respectively. 

Figure~\ref{fig:compare_reg_spaces} contrasts the workflow structures of representative slot allocation algorithms typically found in the literature (operating in the Trajectory Solution Space) with those classified as Regulation Planners (aligned with the Regulation Solution Space). Table \ref{tab:rs_vs_tbo} outlines the key distinctions between the two in details. As is evident from this comparison, searching within the regulation space aligns more  with the structure of a \textit{Markov Decision Process} (MDP) rather than a traditional static optimization problem, since each regulatory action taken induces a transition in the system’s state, such as dumping traffic on another area and thereby, changing the flow components. This sequential decision-making formulation introduces a substantially higher level of complexity: the agent must contend with temporal dependencies and delayed effects of regulations. In essence, the challenge lies not only in identifying an optimal regulation at a given instant, but in learning an adaptive policy that balances short-term overload relief, with long-term network stability.

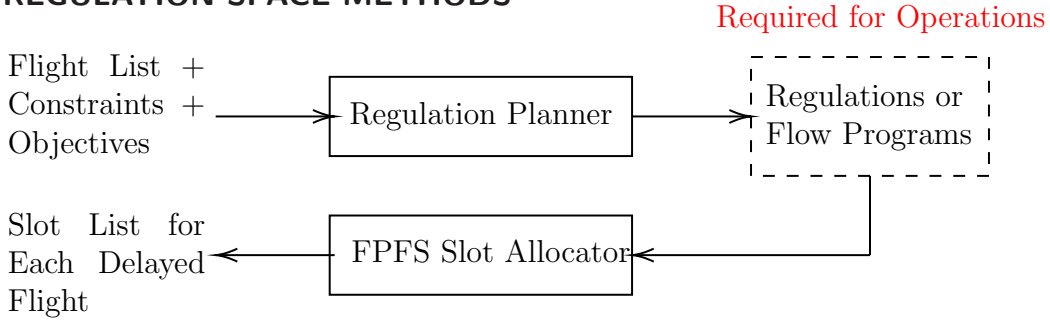
\begin{figure}
    \centering
    \begin{tikzpicture}[x=0.75pt,y=0.75pt,yscale=-1,xscale=1]

\draw   (247.5,70) -- (430,70) -- (430,110) -- (247.5,110) -- cycle ;
\draw    (190,90) -- (248,90) ;
\draw [shift={(250,90)}, rotate = 180] [color={rgb, 255:red, 0; green, 0; blue, 0 }  ][line width=0.75]    (10.93,-3.29) .. controls (6.95,-1.4) and (3.31,-0.3) .. (0,0) .. controls (3.31,0.3) and (6.95,1.4) .. (10.93,3.29)   ;
\draw    (430,90) -- (488,90) ;
\draw [shift={(490,90)}, rotate = 180] [color={rgb, 255:red, 0; green, 0; blue, 0 }  ][line width=0.75]    (10.93,-3.29) .. controls (6.95,-1.4) and (3.31,-0.3) .. (0,0) .. controls (3.31,0.3) and (6.95,1.4) .. (10.93,3.29)   ;
\draw   (247.5,230) -- (400,230) -- (400,270) -- (247.5,270) -- cycle ;
\draw    (190,250) -- (248,250) ;
\draw [shift={(250,250)}, rotate = 180] [color={rgb, 255:red, 0; green, 0; blue, 0 }  ][line width=0.75]    (10.93,-3.29) .. controls (6.95,-1.4) and (3.31,-0.3) .. (0,0) .. controls (3.31,0.3) and (6.95,1.4) .. (10.93,3.29)   ;
\draw    (400,250) -- (458,250) ;
\draw [shift={(460,250)}, rotate = 180] [color={rgb, 255:red, 0; green, 0; blue, 0 }  ][line width=0.75]    (10.93,-3.29) .. controls (6.95,-1.4) and (3.31,-0.3) .. (0,0) .. controls (3.31,0.3) and (6.95,1.4) .. (10.93,3.29)   ;
\draw   (247.5,300) -- (400,300) -- (400,340) -- (247.5,340) -- cycle ;
\draw    (520,320) -- (402,320) ;
\draw [shift={(400,320)}, rotate = 360] [color={rgb, 255:red, 0; green, 0; blue, 0 }  ][line width=0.75]    (10.93,-3.29) .. controls (6.95,-1.4) and (3.31,-0.3) .. (0,0) .. controls (3.31,0.3) and (6.95,1.4) .. (10.93,3.29)   ;
\draw    (520,280) -- (520,320) ;
\draw    (250,320) -- (192,320) ;
\draw [shift={(190,320)}, rotate = 360] [color={rgb, 255:red, 0; green, 0; blue, 0 }  ][line width=0.75]    (10.93,-3.29) .. controls (6.95,-1.4) and (3.31,-0.3) .. (0,0) .. controls (3.31,0.3) and (6.95,1.4) .. (10.93,3.29)   ;
\draw  [dash pattern={on 4.5pt off 4.5pt}] (460,220) -- (580,220) -- (580,280) -- (460,280) -- cycle ;

\draw (256,82) node [anchor=north west][inner sep=0.75pt]   [align=left] {Slot Allocation Algorithm};
\draw (135,95) node   [align=left] {\begin{minipage}[lt]{74.8pt}\setlength\topsep{0pt}
Flight List + Constraints + Objectives
\end{minipage}};
\draw (555,95) node   [align=left] {\begin{minipage}[lt]{74.8pt}\setlength\topsep{0pt}
Slot List for Each Delayed Flight
\end{minipage}};
\draw (81,32) node [anchor=north west][inner sep=0.75pt]   [align=left] {\textbf{TRAJECTORY SOLUTION SPACE METHODS}};
\draw (81,181) node [anchor=north west][inner sep=0.75pt]   [align=left] {\textbf{REGULATION SPACE METHODS}};
\draw (135,245) node   [align=left] {\begin{minipage}[lt]{74.8pt}\setlength\topsep{0pt}
Flight List + Constraints + Objectives
\end{minipage}};
\draw (256,241) node [anchor=north west][inner sep=0.75pt]   [align=left] {Regulation Planner};
\draw (466,232) node [anchor=north west][inner sep=0.75pt]   [align=left] {Regulations or\\Flow Programs};
\draw (257,311) node [anchor=north west][inner sep=0.75pt]   [align=left] {FPFS Slot Allocator};
\draw (135,325) node   [align=left] {\begin{minipage}[lt]{74.8pt}\setlength\topsep{0pt}
Slot List for Each Delayed Flight
\end{minipage}};
\draw (441,192) node [anchor=north west][inner sep=0.75pt]  [color={rgb, 255:red, 255; green, 0; blue, 0 }  ,opacity=1 ] [align=left] {Required for Operations};

\end{tikzpicture}
    \caption{Comparison between the optimization pathways of slot allocation algorithms and those of regulation planning processes.}
    \label{fig:compare_reg_spaces}
\end{figure}

\begin{table}[h]
  \centering
  \caption{Comparisons between the TBO and the Regulation Solution Space.} \label{tab:rs_vs_tbo}
  \begin{tabularx}{\linewidth}{p{50pt} X X}
    \toprule
    {\textbf{Aspect}} & {\textbf{Trajectory Space}} & {\textbf{Regulation Space}} \\
    \midrule
    N. variables & \textbf{Fixed:} Could be realized with a fixed binary vector of delay values whose length is the total number of flights. & \textbf{Varying:} The number of regulations to be created could vary from a dozen to hundreds.\\
    \midrule
    Historical dependence & \textbf{Markovian:} The number of changes one can make to any flight's decision variables depend very little on the history (although the change can be accepted or rejected later by the optimizer). & \textbf{Non-Markovian:} Each regulation proposal must base on the selection of a reference location (typically a hotspot) and the traffic flows associated with the reference location. These two information depend on the previous regulations that had been imposed before.\\
    \midrule
    Evaluation & \textbf{Direct: }Objective functions are relatively straightforward to be computed following a decision variable change. & \textbf{Multi-stage with a nested slot allocation algorithm inside:} The regulations ought to be properly parsed, then the CASA algorithm will assign the delays, then the objective function can be calculated.\\
    \midrule 
    Fine-tuning & \textbf{Limited: }Usually a complete re-run, optionally with a convexification term in the objective function will be required if the flight plans change. & \textbf{Liberal: }Many measures are available: tuning the rate, tweaking of flow filters or a complete re-run are all valid options that operators can choose.\\
    \bottomrule
  \end{tabularx}
\end{table}

\subsection{Our Contributions}

To address these challenges, we propose a general framework dubbed RegulationZero based on Monte Carlo Tree Search (MCTS) \cite{silver2018general} designed to search for \textbf{an optimal sequence of flow-level regulations} that resolves DCB problems, in contrast to other flight-based approaches that rely on deterministic delay assignment schemes. Because the traffic situation evolves naturally in the framework, it aims to support operators precisely on complex cross-border regulation interactions that are prone to cause unpredictability which requires a lot of CDM efforts.

The framework is also fully compatible with existing ATFM workflows worldwide, enabling minimal operational disruption, while being performant enough and preserving fairness through the familiar FPFS principle. More importantly, because the search process operates within the same regulation space used by flow manager experts, it enables natural integration of expert intuition to effectively constrain and guide the search domain. Furthermore, the framework exhibits embarrassingly parallel scalability, thus the solution quality could be improved with more compute, without having to spend more time on iterations.

While the end goal is still delaying flights, it is crucial to emphasize that in the current context, \textbf{RegulationZero is not a \textit{slot allocation optimizer}; rather, it operates as a \textit{planning agent} within the broader ATFCM framework}. 

This positions RegulationZero as a dynamic learning system designed to refine regulatory decisions through iterative network responses rather than solely optimizing predefined allocation metrics. \textbf{This is made evident by noting that RegulationZero interacts with the FPFS slot allocation machinery, steering it to yield favorable outcomes rather than doing the optimization by itself. In a strict sense, RegulationZero is a meta-planner for the FPFS slot allocation algorithm.}

\subsection{Paper Organization}

The remainder of this paper is organized as follows. Section 2 provides a concise review of the literature on Traffic Management Initiative (TMI) optimization, with particular emphasis on ground delay strategies. Section 3 introduces the proposed framework, formalized as a cascaded optimization scheme comprising three main components: a MCTS that guides the regulation search process, a flow-level regulation proposal engine addressing each overloaded TFV, and a slot allocation algorithm responsible for computing individual flight delays. Finally, to assess the effectiveness of the proposed methodology, we conduct a large-scale experiment simulating a half day of pan-European air traffic with over 20,000 flights during the 2023 summer peak. The results demonstrate that the framework achieves robust and computationally efficient performance under realistic operational conditions.

\section{Literature Review}
\subsection{Trajectory Space Slot Optimization Methods}
Classical slot allocation in Air Traffic Flow Management (ATFM) originates from ground-holding formulations that optimally assign departure delays under capacity constraints. The foundational single-airport static problem, as proposed by Richetta and Odoni in \cite{RichettaOdoni1993}, established exact optimization principles for minimizing total delay. This formulation was subsequently extended to multi-airport settings, where coordinated delay assignments across interconnected airports were shown in \cite{VranasBertsimasOdoni1994} to yield globally consistent scheduling solutions. Building on these developments, network-flow models later incorporated en-route capacities and routing decisions, anchoring the minimize-total-delay paradigm and demonstrating scalability to realistic traffic networks \cite{BertsimasPatterson1998,BertsimasPatterson2000}. In European operations, the practical counterpart of these optimization concepts has long been institutionalized through the CASA process, implementing FPFS principles codified in the ATFCM manual.

Recent advances in ATFM slot allocation and trajectory space optimization cluster around three interrelated methodological categories:

\subsubsection{Stochastic and MILP formulations under capacity and uncertainty}
Extending beyond single-airport scheduling, recent research integrates airport and fix capacities in multi-airport system (MAS) formulations. Wang \textit{et al.} \cite{Wang2023TRC} and Liu \textit{et al.} \cite{Liu2022JATM,Liu2024TJSASS,Liu2025Aerospace} proposed mixed-integer and robust optimization models that explicitly capture uncertainties in flight and execution times through chance-constrained formulations. These models achieve tractable MILP representations that enhance schedule robustness with limited displacement amplification. Complementarily, He and Pan \cite{HePan2024PLOSONE} coupled capacity-envelope estimation with slot allocation, thereby integrating strategic flow control with empirically grounded throughput constraints.

\subsubsection{Market mechanisms and meta-heuristics}
To accommodate heterogeneous airline valuations and ensure privacy in collaborative decision-making, a number of research has introduced auction-based and credit-driven reallocation mechanisms for ATFM slots. Schuetz \textit{et al.} \cite{Schuetz2022ICAS} and Lee \textit{et al.} \cite{Lee2024JATM} proposed frameworks that integrate market incentives, grandfather rights, and scalable heuristics within privacy-preserving optimization architectures. In parallel, meta-heuristic approaches such as those developed by Tian \textit{et al.} \cite{Tian2021CIE} use evolutionary algorithms to solve large-scale, multi-objective ATFM problems, generating Pareto-efficient schedules that balance delay, cost, and fairness.

For large and tigtly-constrained problems, such as single- and multi-airport settings, enhanced evolutionary and learning-inspired heuristics have delivered high-quality solutions at scale: for example, an estimation-of-distribution metaheuristic coordinating multiple ATFM measures for slot allocation \cite{Tian2021CIE}, and iterated local search / variable neighborhood search tailored to the simultaneous, system-wide slot allocation problem \cite{Pellegrini2011SSRN}. From the airline perspective during severe airspace flow programs, genetic algorithms that encode route and slot decisions have been shown to reduce disruption and cost \cite{Abdelghany2007JATM}. Beyond pure slotting, hybrid metaheuristics designed by \textit{Chaimatanan}, \textit{Delahaye}, and \textit{Mongeau} strategically deconflict trajectories at continental scale jointly adjusting routes and departure times to balance safety and efficiency \cite{Chaimatanan2014CIM}. Complementing these, co-evolutionary genetic algorithms have been applied to ATFM under dynamic capacity, illustrating how cooperative search can handle network-level congestion while respecting capacity constraints and time-slot assignments \cite{Zhang2007ITSC}.

\subsubsection{Network-wide, multi-objective fairness, connectivity, and environmental goals}
Another emerging direction embeds broader operational and environmental criteria into slot allocation. Keskin and Zografos \cite{KeskinZografos2023TRB}, Feng \cite{Feng2023TRD}, and Dalmau \cite{Dalmau2024SIDs} formulated bi-objective and lexicographic optimization models that incorporate fairness across airlines, onward connectivity preservation, and environmental impact mitigation (e.g., noise abatement). These formulations explicitly characterize trade-offs between delay displacement, connectivity protection, and sustainability, contributing to a framework for MAS-aware, data-driven slot allocation that is both robust to uncertainty and sensitive to multi-stakeholder objectives.

Empirical analyses and simulation-based studies further contextualize these approaches. Lau \textit{et al.} \cite{LauBuddeBerlingGollnick2014} evaluated CASA-like heuristics at network scale, benchmarking their performance against optimization-based baselines. Ivanov \textit{et al.} \cite{IvanovEtAl2017} proposed a two-level mixed-integer model that shapes delay distributions to mitigate propagation effects and improve adherence to airport slot assignments. Similarly, Chen et al. \cite{ChenChenSun2017} demonstrated that chance-constrained formulations can incorporate capacity uncertainty while maintaining computational tractability for large-scale instances. Most recently, Berling, Lau, and Gollnick \cite{BerlingLauGollnick2024} integrated a constraint-reconciliation and column-generation allocator into R-NEST, achieving quantifiable reductions in both primary and reactionary delays while respecting operational constraints on the day of execution.

Despite important advances, most prior work frames ATFM as direct, deterministic flight-level slot/delay allocation, which typically demands deep re-engineering, testing, and certification of legacy systems. This flight-centric view is also not well-aligned with practice: human operational expertise typically in regulation space in terms of rates, occupancy, counts; making controller intuition hard to inject into slot-based optimizations, as well as complicating the verification of the regulation plan's soundness against edge cases.

\subsection{Why Regulations still Persist after Decades of Slot Optimization Research}

In essence, current European ATFCM operations prioritize predictability and stability of outcomes over strict computational optimality, due to its specific context of relying on CDM as the central philosophy linking towers, NM, and airline operations. Owing to the inherently volatile nature of upstream data ranging from dynamic flight plan updates and reactive knock-on delays to unpredictable weather phenomena, routine updates such to EOBT and DPI routinely trigger recalculations within the ETFMS. 

While the theoretical foundation of slot optimization is well-established, executing a full recalculation cycle every 15 minutes in response to upstream perturbations would lead to a cascade of delay reassignment messages that must be sent, processed, and validated. This process imposes an exceptionally high communication and coordination burden across all actors. Consequently, controllers are explicitly advised to avoid unnecessary CTOT modifications, especially once a significant number of Slot Allocation Messages have already been disseminated \cite{EUROCONTROL_ATFCM_Manual_2024}. Furthermore, excessive delay reassignment can undermine airline schedule integrity, complicating fleet and crew planning.

In practice, the combination of the FPFS slot allocation and prudent \textit{regulation planning} produces relatively stable flight sequences and predictable CTOTs that are revised only when input conditions or capacity constraints materially change \cite{EUROCONTROL_ATFCM_Manual_2024}. Under this regime, ETFMS autonomously issues adjusted CTOTs when necessary, thereby allowing operational actors to concentrate on higher-level collaborative decision-making and coordination.

\subsection{Regulation Space Optimization Methods}
Despite the pressing operational needs, optimization at the regulation level, on the other hand, are rarely explored. \cite{Dalmau2021Indispensable} observed that not all regulations are equally important, and building upon that, \cite{Dalmau2022Optimal} proposed to construct regulations greedily by capping traffic flow at nominal capacity.

\subsection{Positioning of our Paper}
The foregoing review showed a clear gap in existing research and operational practice that this paper seeks to address:

\begin{itemize}
\item In current operations, virtually all ATFM systems worldwide implement slot allocation, whereby delays are churned out by an algorithm following a sets of \textit{regulations} manually designed by expert flow managers.
\item Even forthcoming system evolutions such as the \textit{Targeted CASA} initiative are focused primarily on providing more refined control over traffic selection and rate adjustment, rather than fundamentally replacing the underlying FPFS allocation and regulation mechanisms \cite{EUROCONTROL_NOP_2024_2029}.
\item Existing formulations on \textit{automated regulation design} insufficiently capture two essential dimensions of a regulatory decision: the adjustment of the \textit{regulation rate} and the explicit modeling of \textit{contributing flows}. Consequently, the development of a comprehensive, fully automated regulation planning methodology remains an open challenge.
\item Eurocontrol also noted that interacting flow management measures can yield complex and sometimes counterintuitive effects \cite{EUROCONTROL_ATFCM_Manual_2024}. Therefore, automation of regulation design is a relatively new and under-researched area that could have high-impacts on operational realities.
\end{itemize}

Our paper directly addresses this unfilled operational and methodological gap. We position our contribution as a \textit{decision support framework} that augments existing ATFM operational processes rather than replacing them. Specifically, we develop an optimization-based methodology that automatically constructs regulation plans by jointly determining rate profiles and flow participation under realistic capacity and connectivity constraints. As a result, \textbf{the solutions provided by our framework are fully compatible with both current CASA operations and forthcoming \textit{Targeted CASA} implementations, as the framework accommodates either single-rate or multi-rate specifications across subflows} \cite{EUROCONTROL_NOP_2024_2029}.

\section{Methodology}\label{sec:method}
\subsection{Regulations, Regulation Ordering, and Regulation Cascading}

In European ATFM, a regulation is a flow-management measure that constrains demand to an ATC capacity, and, when activated, triggers the Network Manager’s slot allocator (ETFMS's CASA Algorithm) to sequence affected flights and issue CTOTs to operators and towers. As shown in Figure \ref{fig:regulations_and_flow_programs}, a regulation (or a Traffic Flow Program) typically contains four elements: a reference location (sector/airport/traffic volume), the traffic filtering conditions (the flow to be captured), an active time window, and an allowed entry rate (usually in entries per hour). 

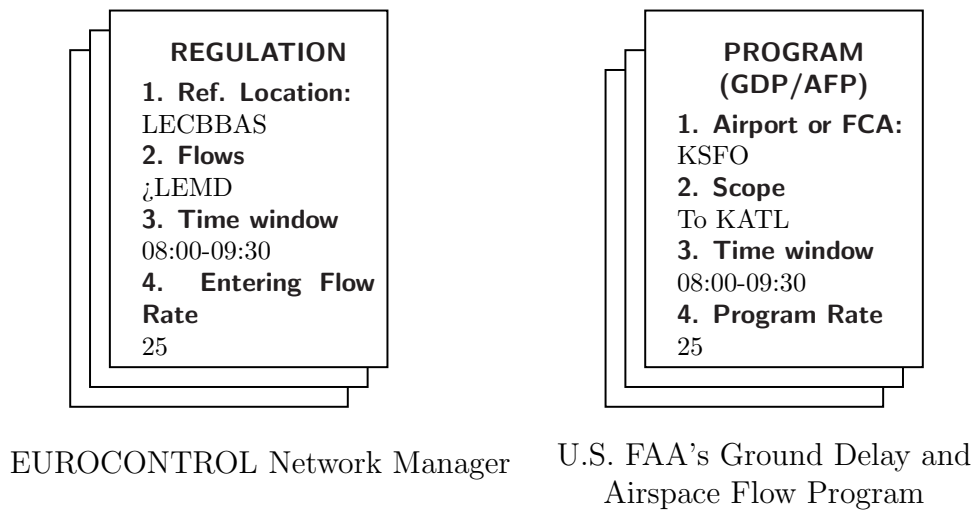
\begin{figure}
    \centering\begin{tikzpicture}[x=0.75pt,y=0.75pt,yscale=-1,xscale=1]

\draw  [fill={rgb, 255:red, 255; green, 255; blue, 255 }  ,fill opacity=1 ] (320,70) -- (460,70) -- (460,240) -- (320,240) -- cycle ;
\draw  [fill={rgb, 255:red, 255; green, 255; blue, 255 }  ,fill opacity=1 ] (50,60) -- (190,60) -- (190,240) -- (50,240) -- cycle ;
\draw  [fill={rgb, 255:red, 255; green, 255; blue, 255 }  ,fill opacity=1 ] (60,50) -- (200,50) -- (200,230) -- (60,230) -- cycle ;
\draw  [fill={rgb, 255:red, 255; green, 255; blue, 255 }  ,fill opacity=1 ] (70,40) -- (210,40) -- (210,220) -- (70,220) -- cycle ;
\draw  [fill={rgb, 255:red, 255; green, 255; blue, 255 }  ,fill opacity=1 ] (330,60) -- (470,60) -- (470,230) -- (330,230) -- cycle ;
\draw  [fill={rgb, 255:red, 255; green, 255; blue, 255 }  ,fill opacity=1 ] (340,40) -- (480,40) -- (480,220) -- (340,220) -- cycle ;

\draw (145,135) node  [font=\footnotesize] [align=left] {\begin{minipage}[lt]{88.4pt}\setlength\topsep{0pt}
\begin{center}
\textbf{{\footnotesize REGULATION}}
\end{center}
{\footnotesize \textbf{1. Ref. Location:}}\\{\footnotesize LECBBAS}\\{\footnotesize \textbf{2. Flows}}\\{\footnotesize >LEMD}\\{\footnotesize \textbf{3. Time window}}\\{\footnotesize 08:00-09:30}\\{\footnotesize \textbf{4. Entering Flow Rate}}\\{\footnotesize 25}
\end{minipage}};
\draw (18,261) node [anchor=north west][inner sep=0.75pt]   [align=left] {EUROCONTROL Network Manager};
\draw (415,135) node  [font=\footnotesize] [align=left] {\begin{minipage}[lt]{88.4pt}\setlength\topsep{0pt}
\begin{center}
{\footnotesize \textbf{PROGRAM (GDP/AFP)}}
\end{center}
{\footnotesize \textbf{1. Airport or FCA:}}\\{\footnotesize KSFO}\\{\footnotesize \textbf{2. Scope}}\\{\footnotesize To KATL}\\{\footnotesize \textbf{3. Time window}}\\{\footnotesize 08:00-09:30}\\{\footnotesize \textbf{4. Program Rate}}\\{\footnotesize 25}
\end{minipage}};
\draw (400,275) node   [align=left] {\begin{minipage}[lt]{176.8pt}\setlength\topsep{0pt}
\begin{center}
U.S. FAA's Ground Delay and \\Airspace Flow Program
\end{center}

\end{minipage}};

\end{tikzpicture}
    \caption{Regulations (EU) and Flow Programs (US) that will serve as input for slot allocation algorithms.} \label{fig:regulations_and_flow_programs}
\end{figure}

The fundamental principle in regulation design is to target traffic volumes (TFVs) exhibiting DCB imbalances. Once identified, the entry rate is capped according to the declared sector capacity that air traffic controllers can safely accommodate. The flow component of the regulation provides operators with more surgical control, enabling targeted interventions on flights contributing to the imbalance while minimizing overall impact and avoiding an excessive number of delay assignment messages \cite{EurocontrolFlowConops2024}.

\subsection{Motivating Example: How the Order of Application Shapes Network Responses}
As discussed above, regulations may interact in nonlinear and sometimes counterintuitive ways. In the example illustrated in Figure~\ref{fig:regulation_order_example}, the sequencing of regulation application can lead to markedly different traffic flow outcomes:  
\begin{itemize}
    \item \textbf{Case 1:} A regulation is first applied to TFV~A, reducing flow~(i) from five to three flights while leaving flow~$x_1$ (comprised of a single flight) unaffected. A subsequent regulation on TFV~C then fully restricts flow~$x_1$ and reduces flow~(ii) from seven to three flights.
    \item \textbf{Case 2:} If the first regulation instead targets TFV~C, it may simultaneously affect flows~(i) and~(ii). Should flow~(ii) be scheduled later than flow~(i), this regulation could entirely hold all flights in flow~(ii) for later release. A subsequent regulation at TFV~A would then further constrain flow~(i) by reducing its entry rate.
\end{itemize}

\textbf{These contrasting outcomes underscore the sensitivity of network behavior to the order in which regulations are implemented.
} In the current operation, this aspect is often overlooked. Regulations are commonly applied simultaneously, and each flight is assigned a delay corresponding to the maximum value among all applicable regulations \cite{EUROCONTROL_ATFCM_Manual_2024}. This observation likely explains the ``surprising effects'' and pronounced volatility frequently reported by operators. Furthermore, the flow patterns considered for a new regulation depend on the set of previously implemented regulations, thereby violating the assumption of a fixed action space, hence poses significant technical challenges for the direct application of conventional metaheuristic or dynamic programming approaches, which typically require an invariant search space.

\begin{figure}
    \centering
    \begin{tikzpicture}[x=0.75pt,y=0.75pt,yscale=-1,xscale=1]

\draw  [dash pattern={on 4.5pt off 4.5pt}] (112,100.67) -- (162,130.67) -- (112,210.67) -- (82,190.67) -- (62,130.67) -- (81.67,102) -- cycle ;
\draw  [dash pattern={on 4.5pt off 4.5pt}] (112,210.67) -- (162,130.67) -- (226.33,118.67) -- (207.67,206.67) -- cycle ;
\draw [color={rgb, 255:red, 255; green, 0; blue, 0 }  ,draw opacity=1 ]   (30.67,202.67) -- (88.33,173.33) -- (127.67,126.67) -- (209.67,82.67) ;
\draw [color={rgb, 255:red, 255; green, 0; blue, 0 }  ,draw opacity=1 ]   (127.67,126.67) -- (88.33,173.33) -- (81.67,220) -- (38.33,258) ;
\draw  [dash pattern={on 4.5pt off 4.5pt}] (125.67,20) -- (233,52) -- (226.33,118.67) -- (162,130.67) -- (112,100.67) -- cycle ;
\draw   (20,10) -- (280,10) -- (280,270) -- (20,270) -- cycle ;
\draw [color={rgb, 255:red, 74; green, 144; blue, 226 }  ,draw opacity=1 ]   (200,90) -- (127.67,126.67) -- (170,220) -- (250,264) ;
\draw  [fill={rgb, 255:red, 0; green, 0; blue, 0 }  ,fill opacity=1 ] (209.67,77.67) -- (214.67,87.67) -- (204.67,87.67) -- cycle ;
\draw  [dash pattern={on 4.5pt off 4.5pt}] (402,100.67) -- (452,130.67) -- (402,210.67) -- (372,190.67) -- (352,130.67) -- (371.67,102) -- cycle ;
\draw  [dash pattern={on 4.5pt off 4.5pt}] (402,210.67) -- (452,130.67) -- (516.33,118.67) -- (497.67,206.67) -- cycle ;
\draw [color={rgb, 255:red, 255; green, 0; blue, 0 }  ,draw opacity=1 ]   (417.67,126.67) -- (378.33,173.33) -- (371.67,220) -- (328.33,258) ;
\draw  [dash pattern={on 4.5pt off 4.5pt}] (415.67,20) -- (523,52) -- (516.33,118.67) -- (452,130.67) -- (402,100.67) -- cycle ;
\draw   (310,10) -- (570,10) -- (570,270) -- (310,270) -- cycle ;
\draw [color={rgb, 255:red, 74; green, 144; blue, 226 }  ,draw opacity=1 ]   (490,90) -- (417.67,126.67) -- (460,220) -- (540,264) ;
\draw  [fill={rgb, 255:red, 0; green, 0; blue, 0 }  ,fill opacity=1 ] (499.67,77.67) -- (504.67,87.67) -- (494.67,87.67) -- cycle ;
\draw  [dash pattern={on 4.5pt off 4.5pt}] (402,409.67) -- (452,439.67) -- (402,519.67) -- (372,499.67) -- (352,439.67) -- (371.67,411) -- cycle ;
\draw  [dash pattern={on 4.5pt off 4.5pt}] (402,519.67) -- (452,439.67) -- (516.33,427.67) -- (497.67,515.67) -- cycle ;
\draw [color={rgb, 255:red, 255; green, 0; blue, 0 }  ,draw opacity=1 ]   (417.67,435.67) -- (378.33,482.33) -- (371.67,529) -- (328.33,567) ;
\draw  [dash pattern={on 4.5pt off 4.5pt}] (415.67,329) -- (523,361) -- (516.33,427.67) -- (452,439.67) -- (402,409.67) -- cycle ;
\draw   (310,319) -- (570,319) -- (570,579) -- (310,579) -- cycle ;
\draw  [fill={rgb, 255:red, 0; green, 0; blue, 0 }  ,fill opacity=1 ] (499.67,386.67) -- (504.67,396.67) -- (494.67,396.67) -- cycle ;
\draw [color={rgb, 255:red, 255; green, 0; blue, 0 }  ,draw opacity=1 ]   (320.67,511.67) -- (378.33,482.33) -- (417.67,435.67) -- (499.67,391.67) ;

\draw (34.67,97) node [anchor=north west][inner sep=0.75pt]   [align=left] {TV A};
\draw (225.33,161.67) node [anchor=north west][inner sep=0.75pt]   [align=left] {TV B};
\draw (237.33,73) node [anchor=north west][inner sep=0.75pt]   [align=left] {TV C};
\draw (91,282) node [anchor=north west][inner sep=0.75pt]   [align=left] {Original Situation};
\draw (27,188) node [anchor=north west][inner sep=0.75pt]  [font=\scriptsize] [align=left] {x1};
\draw (31,242) node [anchor=north west][inner sep=0.75pt]  [font=\scriptsize] [align=left] {x5};
\draw (241,242) node [anchor=north west][inner sep=0.75pt]  [font=\scriptsize] [align=left] {x7};
\draw (324.67,97) node [anchor=north west][inner sep=0.75pt]   [align=left] {TV A};
\draw (515.33,161.67) node [anchor=north west][inner sep=0.75pt]   [align=left] {TV B};
\draw (527.33,73) node [anchor=north west][inner sep=0.75pt]   [align=left] {TV C};
\draw (331,282) node [anchor=north west][inner sep=0.75pt]   [align=left] {Regulation at TV A, then at TV C };
\draw (321,242) node [anchor=north west][inner sep=0.75pt]  [font=\scriptsize] [align=left] {x1};
\draw (531,242) node [anchor=north west][inner sep=0.75pt]  [font=\scriptsize] [align=left] {x3};
\draw (51,251) node [anchor=north west][inner sep=0.75pt]   [align=left] {(i)};
\draw (221,232) node [anchor=north west][inner sep=0.75pt]   [align=left] {(ii)};
\draw (324.67,406) node [anchor=north west][inner sep=0.75pt]   [align=left] {TV A};
\draw (515.33,470.67) node [anchor=north west][inner sep=0.75pt]   [align=left] {TV B};
\draw (527.33,382) node [anchor=north west][inner sep=0.75pt]   [align=left] {TV C};
\draw (331,591) node [anchor=north west][inner sep=0.75pt]   [align=left] {Regulation at TV C, then at TV A };
\draw (321,551) node [anchor=north west][inner sep=0.75pt]  [font=\scriptsize] [align=left] {x3};
\draw (316,492) node [anchor=north west][inner sep=0.75pt]  [font=\scriptsize] [align=left] {x1};

\end{tikzpicture}
    
    \caption{The ordering of regulation affects the flows available for the next step. ``xN'' means the flow contains N flights.}
    \label{fig:regulation_order_example}
\end{figure}
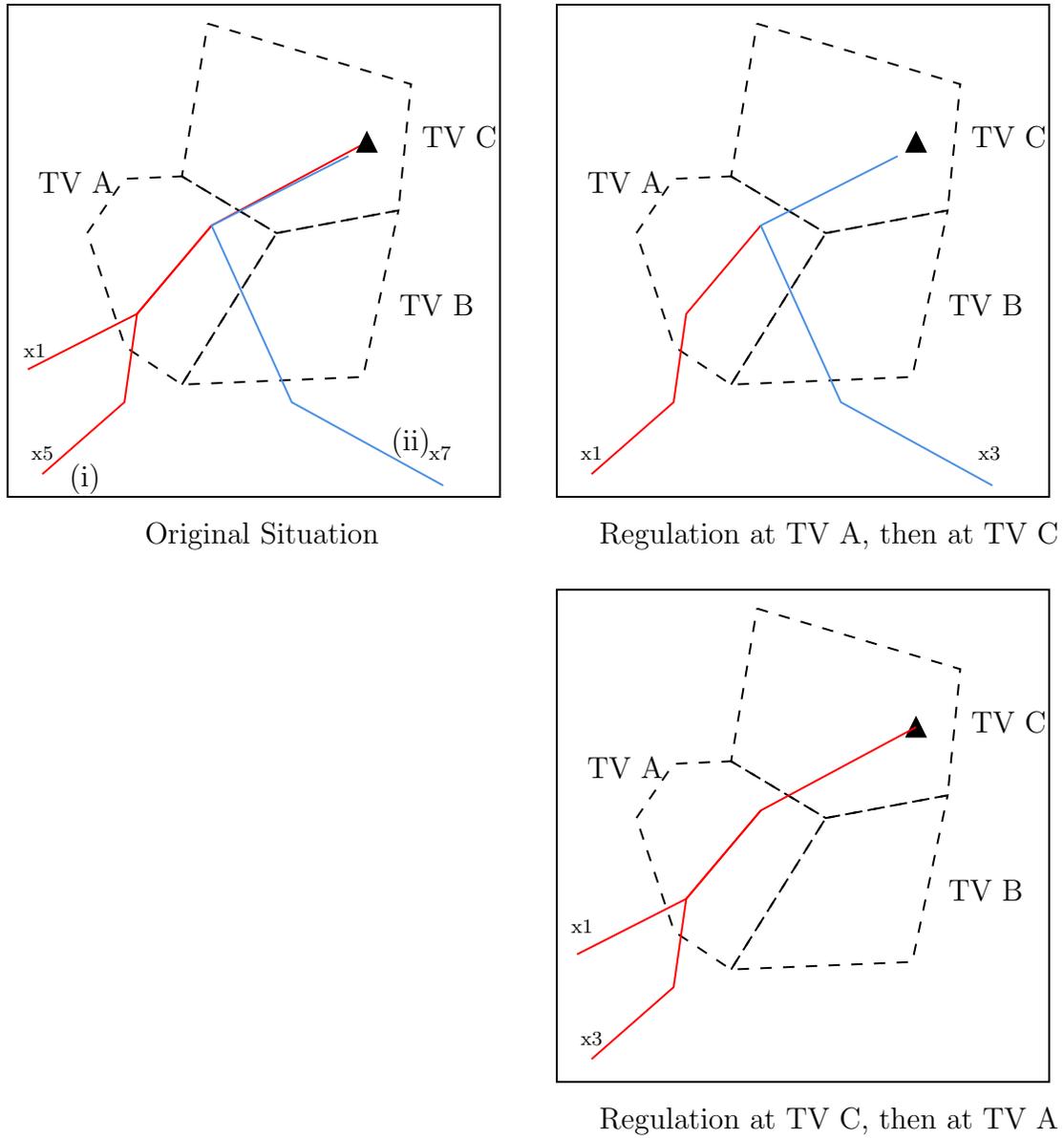

\subsection{Problem Formulation}
Owing to the structural resemblance of our problem to strategic decision-making in \textit{chess}, we adopt the \textit{AlphaZero} framework \cite{silver2018general} to derive the optimal \textit{sequence of regulations} that minimizes DCB imbalances while accounting for delay propagation. Similar to AlphaZero, our framework leverages MCTS to balance exploration and exploitation under the principle of optimism \cite{keller2012prost}. At each iteration, the algorithm simulates the application of candidate regulations and propagates the observed rewards backward through the search tree, progressively refining its policy. As the exploration bonus decays over time, the algorithm converges toward a greedy strategy that prioritizes the most frequently visited nodes, corresponding to actions with the highest estimated value.

\subsubsection{Definitions}
Fix a bin width of $\Delta = 15$ minutes. Let the planning day be partitioned into $96$ consecutive time bins indexed by
\[
\mathcal{T} \equiv \{0,1,\ldots,95\}.
\]
For $t \in \mathcal{T}$, bin $t$ corresponds to the half-open interval $[t\Delta,(t+1)\Delta)$ of minutes after midnight.
For integers $a \le b$, we use the discrete interval notation
\[
[a,b]_{\mathbb{Z}} \equiv \{a,a+1,\ldots,b\}.
\]

Let $\mathcal{V}$ denote the finite set of traffic volumes (TFVs). A generic element is $v \in \mathcal{V}$. Let $\mathcal{F}$ denote the set of flights considered in the planning horizon. For a flight $f \in \mathcal{F}$ that traverses volume $v$, let $e_{v,f} \in [0,24\mathrm{h})$ denote the (continuous) entry time into $v$ and $x_{v,f} \in (0,24\mathrm{h}]$ the exit time, with $e_{v,f} < x_{v,f}$; if $f$ does not traverse $v$, these times are undefined.
Define the entry-bin map $b(t) \equiv \lfloor t/\Delta \rfloor$ so that the entry bin of $f$ into $v$ is $b(e_{v,f}) \in \mathcal{T}$.

For each $v \in \mathcal{V}$ and $t \in \mathcal{T}$, define the within-bin entry count
\[
E_{v,t} \;\equiv\; \bigl|\{f \in \mathcal{F} : b(e_{v,f}) = t \}\bigr|.
\]
The rolling-hour (or a 60-min sliding window) entry demand starting at bin $t$ is
\[
D_{v,t} \;\equiv\; \sum_{k=0}^{3} E_{v,\,t + k},
\]
i.e., the number of entries into $v$ over the hour $[t\Delta, (t+4)\Delta)$.

Let $C_{v,t} \in \mathbb{N}$ denote the allowable hourly entry capacity for volume $v$ associated with the rolling window starting at bin $t$. We define the excess (overload) as
\[
G_{v,t} \;\equiv\; D_{v,t} - C_{v,t}.
\]

A hotspot segment for volume $v$ is a maximal contiguous discrete interval $[t_i,t_f]_{\mathbb{Z}} \subseteq \mathcal{T}$ such that
\[
G_{v,t} > 0, \quad \forall\, t \in [t_i,t_f]_{\mathbb{Z}}.
\]
We denote such a segment by $h \equiv (v, t_i, t_f)$. Maximality means that $t_i=0$ or $G_{v,t_i-1}\le 0$, and $t_f=95$ or $G_{v,t_f+1}\le 0$.

We also define the instantaneous occupancy
\[
n_v(u) \;\equiv\; \bigl|\{ f \in \mathcal{F} : e_{v,f} \le u < x_{v,f} \}\bigr|, \qquad u \in [0,24\mathrm{h}).
\]
Its bin-aggregated average is
\[
\bar{n}_{v,t} \;\equiv\; \frac{1}{\Delta} \int_{t\Delta}^{(t+1)\Delta} n_v(u)\,du,
\]
and the rolling-hour average occupancy is $\sum_{k=0}^{3} \bar{n}_{v,\,t + k}$. Note that rate regulations act on entries ($D_{v,t}$), not on occupancy.

A regulation is a quadruple
\[
\rho \;=\; \bigl(v_c,\, t_{i,c},\, t_{f,c},\, \tau_c \bigr),
\]
where $v_c \in \mathcal{V}$ is the control volume, $[t_{i,c}, t_{f,c}]_{\mathbb{Z}} \subseteq \mathcal{T}$ is the regulation’s nominal active period, and $\tau_c \in \mathbb{N}$ is the hourly entry-rate limit (entries per hour) imposed at $v_c$.

Because $D_{v,t}$ is defined on sliding-hour windows, effective regulation may begin up to three bins (45 minutes) earlier to affect the first constrained window and may extend beyond the nominal end to avoid post-regulation count bunching. We therefore define regulation margins
\[
\Delta_{e}, \Delta_{r} \in \mathbb{N}_0 \quad\text{with}\quad 0 \le \Delta_{e} \le 3,\ \ \Delta_{r} \ge 1,
\]
and the effective active window
\[
I_c^{\mathrm{eff}} \;\equiv\; [\,t_{i,c} - \Delta_e,\; t_{f,c} + \Delta_r\,]_{\mathbb{Z}} \cap \mathcal{T}
\]

During $I_c^{\mathrm{eff}}$, the regulated entry demand satisfies
\[
D_{v_c,t}^{(\rho)} \;\le\; \tau_c, \qquad \forall\, t \in I_c^{\mathrm{eff}},
\]
where $D_{v_c,t}^{(\rho)}$ denotes the rolling-hour entry demand at $v_c$ after the regulation-induced metering/queuing is applied.

A (partial) regulation plan is an ordered finite sequence
\[
s \;=\; (\rho^1, \rho^2, \ldots, \rho^K), \qquad K \in \mathbb{N}_0,
\]
with $\rho^j$ regulations as above. Its cardinality is $|s| = K$. We view $s$ as the state of the decision process. Given a plan $s$, we write $D_{v,t}^{(s)}$ for the induced rolling-hour entry demand and $C_{v,t}^{(s)}$ for the effective capacity profile after applying all regulations in $s$.
\subsubsection{FPFS Slot Allocation (FIFO Queue)}\label{subsec:fpfs_slot}
For a regulation $\rho=(v_c,t_{i,c},t_{f,c},\tau_c)$ with margins $\Delta_e,\Delta_r$, let the effective continuous-time window be
\[
W_c^{\mathrm{eff}} \equiv \bigl[\,a_c,\, b_c\,\bigr), 
\qquad a_c \equiv (t_{i,c}-\Delta_e)\Delta,\quad b_c \equiv (t_{f,c}+\Delta_r+1)\Delta,
\]
where $\Delta=15$ minutes.
Let the slot spacing be
\[
\sigma_c \equiv \frac{60}{\tau_c}\ \text{minutes}, \qquad \tau_c \in \mathbb{N},\ \tau_c \ge 1,
\]
and define the slot sequence anchored at $a_c$ by
\[
s_{c,0} \equiv a_c,\qquad s_{c,m} \equiv s_{c,0} + m\,\sigma_c,\quad m \in \mathbb{N}_0.
\]
Let the set of regulated flights be
\[
\mathcal{F}_c \equiv \{\, f \in \mathcal{F} : \text{$f$ traverses $v_c$ and } e_{v_c,f} \in W_c^{\mathrm{eff}} \,\}.
\]
Order $\mathcal{F}_c$ by nondecreasing planned entry times and a fixed deterministic tie-breaker (e.g., flight identifier): 
\[
e_{v_c,f^{(1)}} \le e_{v_c,f^{(2)}} \le \cdots \le e_{v_c,f^{(N_c)}},\quad N_c \equiv |\mathcal{F}_c|.
\]
Define $m_0 \equiv -1$ and, for $j=1,\ldots,N_c$,
\begin{equation}
m_j \equiv \max\!\left\{\, m_{j-1}+1,\ \left\lceil \frac{e_{v_c,f^{(j)}} - s_{c,0}}{\sigma_c} \right\rceil \right\}, 
\qquad 
\hat e_{v_c,f^{(j)}}^{(\rho)} \equiv s_{c,0} + m_j\,\sigma_c.
\end{equation}
The concrete delay (in minutes) assigned to $f^{(j)}$ by $\rho$ is
\begin{equation}
d_{f^{(j)}}^{(\rho)} \equiv \hat e_{v_c,f^{(j)}}^{(\rho)} - e_{v_c,f^{(j)}} \;\;\in\; [0,\infty).
\end{equation}
For flights not subject to $\rho$ (i.e., $f \notin \mathcal{F}_c$), set $\hat e_{v_c,f}^{(\rho)} \equiv e_{v_c,f}$ and $d_f^{(\rho)} \equiv 0$.

\subsubsection{Objective Function}

Given a (partial) regulation plan $s=(\rho^1,\ldots,\rho^K)$, we define plan-dependent quantities and a scalar objective $J(s)$ to be minimized.

Let $\hat e_{v,f}^{(s)}$ denote the realized (post-regulation) entry time of flight $f \in \mathcal{F}$ into volume $v \in \mathcal{V}$ after applying all regulations in $s$; in the absence of regulation, $\hat e_{v,f}^{(s)} \equiv e_{v,f}$. Define the bin map $b(u) \equiv \lfloor u/\Delta \rfloor$ with $\Delta=15$ minutes and the induced within-bin entry counts
\[
E_{v,t}^{(s)} \;\equiv\; \bigl|\{\, f \in \mathcal{F} : b(\hat e_{v,f}^{(s)}) = t \,\}\bigr|,\qquad v \in \mathcal{V},\ t \in \mathcal{T}.
\]
The induced rolling-hour entry demand is
\[
D_{v,t}^{(s)} \;\equiv\; \sum_{k=0}^{3} E_{v,\,t + k}^{(s)},\qquad v \in \mathcal{V},\ t \in \mathcal{T}.
\]
For a flight $f$, let $\mathcal{V}(f) \subseteq \mathcal{V}$ be the set of volumes traversed by $f$. The net delay (in minutes) assigned to $f$ by plan $s$ is
\[
d_f^{(s)} \;\equiv\; \max_{v \in \mathcal{V}(f)} \bigl( \hat e_{v,f}^{(s)} - e_{v,f} \bigr)_+,
\]
where $(x)_+ \equiv \max\{x,0\}$ denotes the positive part. This definition avoids double-counting when multiple regulations constrain the same flight.

Let $C_{v,t} \in \mathbb{N}$ be the hourly entry capacity for volume $v$ associated with the rolling window starting at bin $t$. For weights $w_{\mathrm{DELAY}}, w_{\mathrm{REG}}, w_{\mathrm{TV}} \ge 0$, define
\begin{equation}
J(s) \;=\; J_{\mathrm{CAP}}(s) \;+\; w_{\mathrm{DELAY}}\, J_{\mathrm{DELAY}}(s) \;+\; w_{\mathrm{REG}}\, J_{\mathrm{REG}}(s) \;+\; w_{\mathrm{TV}}\, J_{\mathrm{TV}}(s),
\end{equation}
with components:
\[
J_{\mathrm{CAP}}(s) \;\equiv\; \sum_{v \in \mathcal{V}} \sum_{t \in \mathcal{T}} \bigl(D_{v,t}^{(s)} - C_{v,t}\bigr)_+,
\]
i.e., the total post-regulation excess demand over all volumes and time bins;
\[
J_{\mathrm{DELAY}}(s) \;\equiv\; \sum_{f \in \mathcal{F}} d_f^{(s)},
\]
the total delay (minutes) assigned to all flights;
\[
J_{\mathrm{REG}}(s) \;\equiv\; |s| \;=\; K,
\]
the number of regulations issued; and
\[
J_{\mathrm{TV}}(s) \;\equiv\; \sum_{v \in \mathcal{V}} \sum_{t=1}^{95} \bigl| E_{v,t}^{(s)} - E_{v,t-1}^{(s)} \bigr|,
\]
the total variation of within-bin entry counts, which penalizes bunching and sharp, short-lived peaks in the regulated flow.

\paragraph{Optimization problem statement.}
Let $\mathcal{S}$ be the set of admissible regulation plans $s=(\rho^1,\ldots,\rho^K)$ with $K \in \mathbb{N}_0$ and $\rho^j=(v_c^j,t_{i,c}^j,t_{f,c}^j,\tau_c^j)$ such that $v_c^j \in \mathcal{V}$, $t_{i,c}^j \le t_{f,c}^j \in \mathcal{T}$, and $\tau_c^j \in \mathbb{N}$. Given fixed margins $(\Delta_e,\Delta_r)$ and the FPFS allocator, the plan-induced quantities $E^{(s)}, D^{(s)}, d^{(s)}$ are as defined above. The planning problem is to solve for $s^*$ that minimizes $J(s)$:
\[
\boxed{s^* = \arg\min_{\,s \in \mathcal{S}} \; J(s)}.
\]

\subsubsection{Architecture Overview}

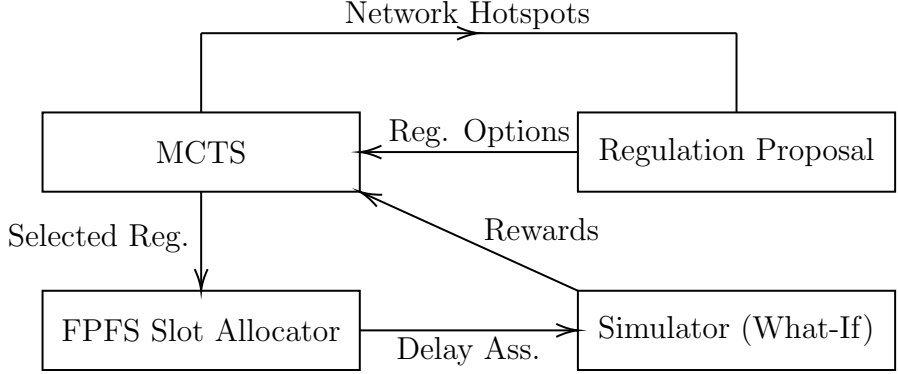
\begin{figure}
\centering\begin{tikzpicture}[x=0.75pt,y=0.75pt,yscale=-1,xscale=1]

\draw   (340,70) -- (500,70) -- (500,110) -- (340,110) -- cycle ;
\draw   (70,70) -- (230,70) -- (230,110) -- (70,110) -- cycle ;
\draw   (340,160) -- (500,160) -- (500,200) -- (340,200) -- cycle ;
\draw   (70,160) -- (230,160) -- (230,200) -- (70,200) -- cycle ;
\draw    (340,90) -- (232,90) ;
\draw [shift={(230,90)}, rotate = 360] [color={rgb, 255:red, 0; green, 0; blue, 0 }  ][line width=0.75]    (10.93,-3.29) .. controls (6.95,-1.4) and (3.31,-0.3) .. (0,0) .. controls (3.31,0.3) and (6.95,1.4) .. (10.93,3.29)   ;
\draw    (150,110) -- (150,158) ;
\draw [shift={(150,160)}, rotate = 270] [color={rgb, 255:red, 0; green, 0; blue, 0 }  ][line width=0.75]    (10.93,-3.29) .. controls (6.95,-1.4) and (3.31,-0.3) .. (0,0) .. controls (3.31,0.3) and (6.95,1.4) .. (10.93,3.29)   ;
\draw    (230,180) -- (338,180) ;
\draw [shift={(340,180)}, rotate = 180] [color={rgb, 255:red, 0; green, 0; blue, 0 }  ][line width=0.75]    (10.93,-3.29) .. controls (6.95,-1.4) and (3.31,-0.3) .. (0,0) .. controls (3.31,0.3) and (6.95,1.4) .. (10.93,3.29)   ;
\draw    (340,160) -- (231.82,110.83) ;
\draw [shift={(230,110)}, rotate = 24.44] [color={rgb, 255:red, 0; green, 0; blue, 0 }  ][line width=0.75]    (10.93,-3.29) .. controls (6.95,-1.4) and (3.31,-0.3) .. (0,0) .. controls (3.31,0.3) and (6.95,1.4) .. (10.93,3.29)   ;
\draw    (150,30) -- (288,30) ;
\draw [shift={(290,30)}, rotate = 180] [color={rgb, 255:red, 0; green, 0; blue, 0 }  ][line width=0.75]    (10.93,-3.29) .. controls (6.95,-1.4) and (3.31,-0.3) .. (0,0) .. controls (3.31,0.3) and (6.95,1.4) .. (10.93,3.29)   ;
\draw    (150,30) -- (150,70) ;
\draw    (420,30) -- (420,70) ;
\draw    (290,30) -- (420,30) ;

\draw (420,90) node   [align=left] {\begin{minipage}[lt]{108.8pt}\setlength\topsep{0pt}
\begin{center}
Regulation Proposal
\end{center}

\end{minipage}};
\draw (150,90) node   [align=left] {\begin{minipage}[lt]{108.8pt}\setlength\topsep{0pt}
\begin{center}
MCTS
\end{center}

\end{minipage}};
\draw (420,180) node   [align=left] {\begin{minipage}[lt]{108.8pt}\setlength\topsep{0pt}
\begin{center}
Simulator (What-If)
\end{center}

\end{minipage}};
\draw (150,180) node  [font=\normalsize] [align=left] {\begin{minipage}[lt]{108.8pt}\setlength\topsep{0pt}
\begin{center}
FPFS Slot Allocator
\end{center}

\end{minipage}};
\draw (243,72) node [anchor=north west][inner sep=0.75pt]   [align=left] {Reg. Options};
\draw (51,125) node [anchor=north west][inner sep=0.75pt]   [align=left] {Selected Reg.};
\draw (247,182) node [anchor=north west][inner sep=0.75pt]   [align=left] {Delay Ass.};
\draw (291,122) node [anchor=north west][inner sep=0.75pt]   [align=left] {Rewards};
\draw (221,12) node [anchor=north west][inner sep=0.75pt]   [align=left] {Network Hotspots};

\end{tikzpicture}
    \caption{RegulationZero Key Components}
    \label{fig:rz_key_components}
\end{figure}

Figure~\ref{fig:rz_key_components} outlines the key components of the RegulationZero framework. The MCTS maintains a search tree representing partial regulation plans and their associated rewards. In each simulation, the search begins from a given partial plan $s_0$ and expands by selecting a hotspot where a new regulation might be applied. The Regulation Proposal engine then takes the hotspot’s flight list, groups related flights through community detection, and evaluates a set of candidate rate reductions which involves an FPFS Slot Allocator (Section \ref{subsec:fpfs_slot}). The most promising proposals will be selected and added to the tree. The trajectory's total reward is the sum of total objective improvements accumulated so far.

Below, we will discuss the components in more details.
\subsection{Regulation Proposal Engine}\label{subsec:proposal_engine}
Given a hotspot segment $h \equiv (v_h, t_{i,h}, t_{f,h})$ selected by the MCTS from a current partial plan $s$, the proposal engine enumerates a small, high-quality set of candidate regulations $\rho=(v_c,t_{i,c},t_{f,c},\tau_c)$ with $v_c=v_h$ to expand the search tree. Each candidate is evaluated by the FPFS allocator (Section \ref{subsec:fpfs_slot}) on the full traffic set to ensure feasibility with respect to sliding-hour counts, and is scored by its incremental improvement in the objective $J$ (Section 2.3). The engine proceeds in three steps: (i) extraction of flows relevant to $h$, (ii) community detection over a flight-similarity graph derived from path “footprints,” and (iii) generation and scoring of rate-window proposals informed by community structure.

\subsubsection{Hotspot-relevant flight set}
Fix a look-back margin $\Delta_h \in \{0,1,2,3\}$ to capture the 60-min rolling-window coupling at the hotspot. We define the hotspot influence interval
$$
I_h \;\equiv\; [\,t_{i,h}-\Delta_h,\ t_{f,h}\,]_{\mathbb{Z}} \cap \mathcal{T},
$$
and the associated flight set
$$
\mathcal{F}_h \;\equiv\; \bigl\{ f \in \mathcal{F} : \text{$f$ traverses $v_h$ and } b(e_{v_h,f}) \in I_h \bigr\}.
$$
By construction, $\mathcal{F}_h$ contains all flights whose planned entries into $v_h$ contribute to at least one overloaded rolling-hour bin in $[t_{i,h},t_{f,h}]_{\mathbb{Z}}$, together with those that may be advanced into these windows by queueing dynamics.

\subsubsection{Flight footprints}
For a flight $f \in \mathcal{F}$, let its route through traffic volumes be the finite sequence
$$
L_f \;\equiv\; \bigl( v_1(f), v_2(f), \ldots, v_{K_f}(f) \bigr), \qquad v_k(f) \in \mathcal{V},
$$
ordered by increasing traversal time. We call $L_f$ the \textit{TV footprint of the flight} $f$. This allows us to define a distance metric that quantifies the \textit{impact similarity} of flights contributing to the hotspot:

Given $f,g \in \mathcal{F}_h$, we define their demand-impact distance as the Jaccard distance between their footprints:
\begin{equation}\label{eq:jaccard}
d_h(f,g) \;\equiv\; 1 \;-\; \frac{\bigl| L_{f} \cap L_{g} \bigr|}{\bigl| L_{f} \cup L_{g} \bigr|} \;\;\in\; [0,1],
\end{equation}
with the convention $d_h(f,g)=0$ if both sets are empty. Intuitively, $d_h$ is small for flights that traverse largely overlapping volumes near the hotspot and large otherwise, thus serving as a proxy for the potential of a single regulation to coherently meter both flights.

Figure~\ref{fig:jaccardexample} illustrates the computation of the Jaccard distance. For any two flights contributing to the hotspot EGLMU during a specific time segment, their corresponding TV footprints are retrieved, and the number of shared TVs is determined. The Jaccard distance is then obtained by dividing the cardinality of the intersection by that of the union of the two footprints, yielding a value of $1/3$.

\begin{figure}
	\centering
	\begin{tikzpicture}[x=0.75pt,y=0.75pt,yscale=-1,xscale=1]
	
	\draw   (210,60) -- (280,60) -- (280,80) -- (210,80) -- cycle ;
	\draw   (290,60) -- (360,60) -- (360,80) -- (290,80) -- cycle ;
	\draw   (370,60) -- (440,60) -- (440,80) -- (370,80) -- cycle ;
	\draw   (450,60) -- (520,60) -- (520,80) -- (450,80) -- cycle ;
	\draw   (210,90) -- (280,90) -- (280,110) -- (210,110) -- cycle ;
	\draw   (290,90) -- (360,90) -- (360,110) -- (290,110) -- cycle ;
	\draw   (370,90) -- (440,90) -- (440,110) -- (370,110) -- cycle ;
	\draw   (450,90) -- (520,90) -- (520,110) -- (450,110) -- cycle ;
	
	\draw (245,70) node  [font=\scriptsize] [align=left] {\begin{minipage}[lt]{47.6pt}\setlength\topsep{0pt}
			\begin{center}
				LFMWMFDZ
			\end{center}
			
	\end{minipage}};
	\draw (325,70) node  [font=\scriptsize,color={rgb, 255:red, 255; green, 0; blue, 0 }  ,opacity=1 ] [align=left] {\begin{minipage}[lt]{47.6pt}\setlength\topsep{0pt}
			\begin{center}
				EGLMU
			\end{center}
			
	\end{minipage}};
	\draw (405,70) node  [font=\scriptsize] [align=left] {\begin{minipage}[lt]{47.6pt}\setlength\topsep{0pt}
			\begin{center}
				LFMAIET
			\end{center}
			
	\end{minipage}};
	\draw (485,70) node  [font=\scriptsize] [align=left] {\begin{minipage}[lt]{47.6pt}\setlength\topsep{0pt}
			\begin{center}
				LFMMFDZ
			\end{center}
			
	\end{minipage}};
	\draw (245,100) node  [font=\scriptsize] [align=left] {\begin{minipage}[lt]{47.6pt}\setlength\topsep{0pt}
			\begin{center}
				LFMWMFDZ
			\end{center}
			
	\end{minipage}};
	\draw (325,100) node  [font=\scriptsize,color={rgb, 255:red, 255; green, 0; blue, 0 }  ,opacity=1 ] [align=left] {\begin{minipage}[lt]{47.6pt}\setlength\topsep{0pt}
			\begin{center}
				EGLMU
			\end{center}
			
	\end{minipage}};
	\draw (405,100) node  [font=\scriptsize] [align=left] {\begin{minipage}[lt]{47.6pt}\setlength\topsep{0pt}
			\begin{center}
				LFMRAES
			\end{center}
			
	\end{minipage}};
	\draw (485,100) node  [font=\scriptsize] [align=left] {\begin{minipage}[lt]{47.6pt}\setlength\topsep{0pt}
			\begin{center}
				LFFOPKZ
			\end{center}
			
	\end{minipage}};
	\draw (365,150) node   [align=left] {\begin{minipage}[lt]{346.8pt}\setlength\topsep{0pt}
			$\displaystyle {\textstyle d_{h}( f,g) =\frac{|\{LFMWMFDZ,\ EGLMU\} |}{|\{LFMWMFDZ,\ EGLMU,LFMAIET,LFMMFDZ,LFMRAES,LFFOPKZ\}} =\frac{1}{3}}$
	\end{minipage}};
	\draw (176,62) node [anchor=north west][inner sep=0.75pt]   [align=left] {$\displaystyle f$};
	\draw (176,90) node [anchor=north west][inner sep=0.75pt]   [align=left] {$\displaystyle g$};

\end{tikzpicture}
	\caption{Demand-impact distance calculation for two flights $f, g$ given the hotspot $h$ EGLMU.}
	\label{fig:jaccardexample}
\end{figure}

\subsubsection{Graph construction and community detection for flow extraction}\label{subsub:flow_x}
We construct an undirected, weighted similarity graph
$$
G_h \;=\; (\mathcal{V}_h^{\mathrm{graph}},\, \mathcal{E}_h,\, w), \qquad \mathcal{V}_h^{\mathrm{graph}} \equiv \mathcal{F}_h,
$$
with edge weights
$$
w_{fg} \;=\; \begin{cases}
1 \quad\text{if $d_h(f,g) > \tau_{min}$,}\\
0 \quad\text{otherwise.}
\end{cases}
$$

We then partition $\mathcal{F}_h$ into communities $\mathcal{P}_h=\{\CC_1,\ldots,\CC_R\}$ using the Leiden algorithm, which iteratively refines an aggregation to maximize a weighted modularity \cite{traag2019louvain}. 

Each triple $(\CC,[t_i,t_f],\tau)$ induces a potential regulation candidate
$$
\rho(\CC,[t_i,t_f],\tau) \;\equiv\; (\, v_h,\ t_i,\ t_f,\ \tau \,).
$$

\begin{figure}
	\centering
	\includegraphics[width=.6\linewidth]{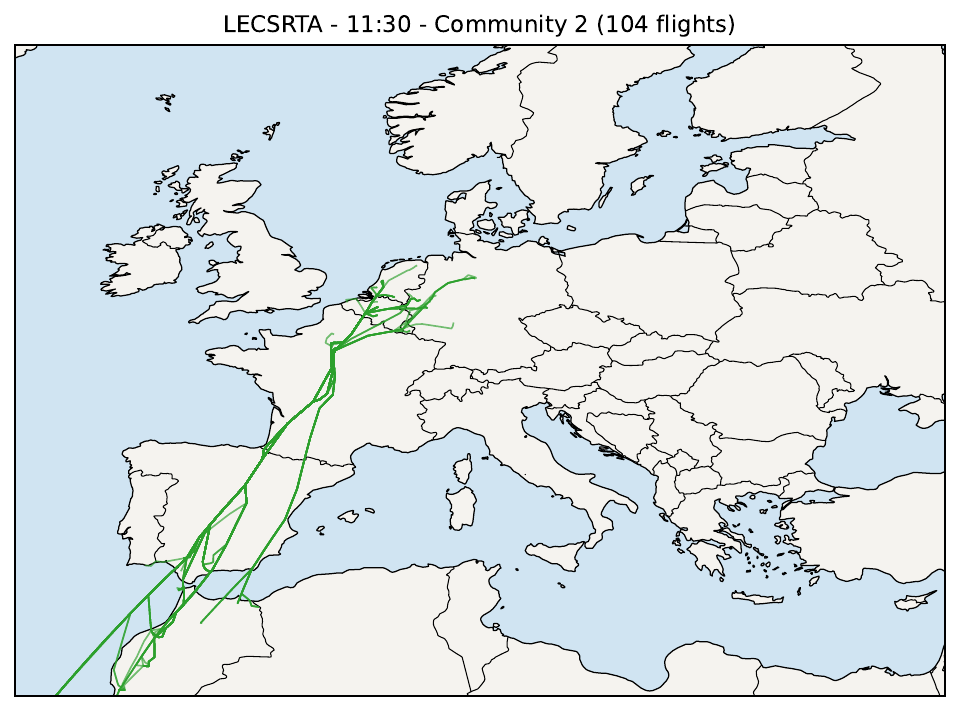}
	\caption{A sample flow extracted from the Community Detection algorithm that is relevant to the hotspot segment LECSRTA (11:30-12:15, 23/07/2023) that could be subjected to regulation. The flow contains flights using Madrid and Paris airspaces.} \label{fig:flow_x_sample}
\end{figure}

\subsubsection{Flow Selection Heuristics}
The flow extraction step in Section~\ref{subsub:flow_x} yields a collection of candidate flows 
\[
\mathcal{P}_h = \{\, \CC_1,\, \CC_2,\,\ldots,\, \CC_R \,\}
\]
associated with the hotspot $h \equiv (v_h, t_{i,h}, t_{f,h})$. Because individual flows often traverse multiple TFVs concurrently, we introduce three heuristic metrics to measure a flow's potential to alleviate simultaneous overloads across several volumes, while mitigating the risk of inducing secondary imbalances elsewhere in the network.

\vspace{4pt}
\paragraph{Flow Pressure.}
To quantify the overall contribution of a flow $\CC_j$ to network-wide overloads, we define its \emph{flow pressure} as the cumulative excess demand induced by the flights in $\CC_j$ across all affected TFVs and rolling-hour bins.
Let
\[
\mathcal{V}(\CC_j) \;\equiv\; \bigcup_{f \in \CC_j} \mathcal{V}(f)
\]
denote the set of volumes traversed by at least one flight in $\CC_j$.
For each $v \in \mathcal{V}(\CC_j)$ and $t \in \mathcal{T}$, let $\mathcal{F}_{v,t}(\CC_j) \subseteq \CC_j$ be the subset of flights of $\CC_j$ entering $v$ during bin $t$, and define their induced demand in that bin as
\[
E_{v,t}(\CC_j) \;\equiv\; |\mathcal{F}_{v,t}(\CC_j)|.
\]
The corresponding rolling-hour contribution of $\CC_j$ at $(v,t)$ is
\[
D_{v,t}(\CC_j) \;\equiv\; \sum_{k=0}^{3} E_{v,\,t+k}(\CC_j).
\]

Then, the flow pressure for $\CC_j$ during the current plan $s$ is
\begin{equation}\label{eq:flow_pressure}
	P_h(C_j\,|\,s)
	\;\equiv\;
	\sum_{v \in \mathcal{V}(\CC_j)} 
	\sum_{t \in [t_{i,h}-\Delta_h,\,t_{f,h}]_{\mathbb{Z}} \cap \mathcal{T}}
	\Bigl[ \,
	D_{v,t}^{(s)} 
	- C_{v,t}
	\,\Bigr]_{
		+}
	\;\cdot\;
	\frac{
		D_{v,t}(\CC_j)
	}{
		\displaystyle |\mathcal{F}_{v,t}|
	}.
\end{equation}
The first component, $\Bigl[\,D_{v,t}^{(s)} - C_{v,t}\,\Bigr]_{+}$, represents the total overload observed at bin $(v,t)$. The second component quantifies the share of this overload that can be attributed to the flow $\CC_j$. The normalization factor ensures that the measure is not biased toward larger traffic flows merely due to their higher number of flights. Conceptually, $P_h(\CC_j\,|\,s)$ captures the degree to which the flights comprising $\CC_j$ are involved in simultaneous capacity exceedances throughout the network.

\begin{figure}
	\centering
	\begin{tikzpicture}[x=0.75pt,y=0.75pt,yscale=-1,xscale=1]
	
	\draw  [fill={rgb, 255:red, 241; green, 241; blue, 241 }  ,fill opacity=1 ] (260,30) -- (310,60) -- (260,140) -- (250,90) -- (210,60) -- cycle ;
	\draw  [fill={rgb, 255:red, 241; green, 241; blue, 241 }  ,fill opacity=1 ] (210,60) -- (250,90) -- (260,140) -- (180,160) -- (150,100) -- cycle ;
	\draw   (310,60) -- (340,120) -- (350,180) -- (270,190) -- (260,140) -- cycle ;
	\draw   (260,140) -- (270,190) -- (240,210) -- (200,210) -- (180,160) -- cycle ;
	\draw [color={rgb, 255:red, 0; green, 73; blue, 255 }  ,draw opacity=1 ]   (120,160) .. controls (245.12,135.52) and (235.12,98.72) .. (320,30) ;
	\draw [color={rgb, 255:red, 0; green, 73; blue, 255 }  ,draw opacity=1 ]   (160,210) .. controls (332.72,184.16) and (235.12,98.72) .. (320,30) ;
	
	\draw (161,96) node [anchor=north west][inner sep=0.75pt]  [font=\footnotesize] [align=left] {TV A};
	\draw (251,46) node [anchor=north west][inner sep=0.75pt]  [font=\footnotesize] [align=left] {TV B};
	\draw (201,192) node [anchor=north west][inner sep=0.75pt]  [font=\footnotesize] [align=left] {TV C};
	\draw (311,162) node [anchor=north west][inner sep=0.75pt]  [font=\footnotesize] [align=left] {TV D};
	\draw (92,141) node [anchor=north west][inner sep=0.75pt]   [align=left] {Flow X};
	\draw (121,191) node [anchor=north west][inner sep=0.75pt]   [align=left] {Flow Y};

\end{tikzpicture}
	\caption{Flow Heuristic Example Setting.}
	\label{fig:flowheuristics}
\end{figure}
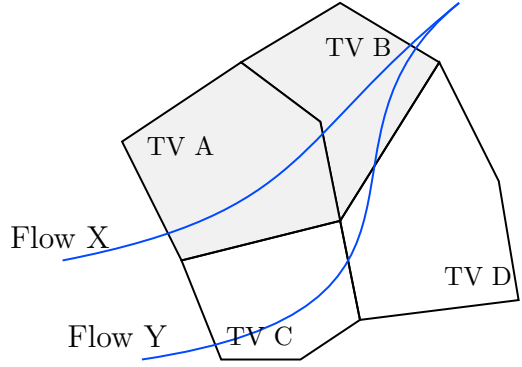

In the example scenario depicted in Figure \ref{fig:flowheuristics}, Flow X has higher pressure since it has the potential to relieve two overloaded hotspots (TV A and B), compared to only one (Flow Y).

\paragraph{Flow's Slacks.} We now introduce a family of metrics designed to quantify the remaining capacity headroom along a flow's footprint. These metrics capture the spare capacity of the most constrained TFV encountered by the flow. A small slack value implies that the local capacity margin is narrow, suggesting that interventions may heighten the risk of secondary hotspots emerging.

For $p,q \in \mathcal{V}$, define the empirical, directional travel-time set
\[
\mathcal{T}_{p \to q}
\;\equiv\;
\bigl\{\, e_{q,f} - e_{p,f} \,:\, f \in \mathcal{F},\ \text{$f$ traverses $p$ then $q$ with } e_{p,f} < e_{q,f} \,\bigr\}.
\]
The reference inter-TV travel-time is the robust median
\[
T(p \to q) \;\equiv\; \operatorname{median}\bigl(\mathcal{T}_{p \to q}\bigr).
\]
which could be approximated as following should computation is difficult:
\[
T(p \to q) \;\equiv\; \frac{\mathrm{dist}(\mathrm{centroid}(p),\,\mathrm{centroid}(q))}{V_{\mathrm{ref}}},
\]
where $V_{\mathrm{ref}}$ is a fixed reference ground speed and $\mathrm{dist}(\cdot,\cdot)$ is the great-circle distance. For a hotspot $h \equiv (v_h,t_{i,h},t_{f,h})$ and any $v \in \mathcal{V}$, we write the signed offset
\[
\Delta_{v,h} \;\equiv\; T(v \to v_h),
\]
with the convention $\Delta_{v,h} > 0$ if $v$ is (on average) upstream of $v_h$, and $\Delta_{v,h} < 0$ if $v$ is downstream.

Fix a hotspot $h=(v_h,t_{i,h},t_{f,h})$, a current partial plan $s$, and a candidate flow $\CC \subseteq \mathcal{F}$ extracted for $h$ (Section~\ref{subsub:flow_x}). Let $\Delta=15$ minutes and $b(u)\equiv\lfloor u/\Delta\rfloor$ be the bin map. For a nominal additional delay $M \in \mathbb{R}_{+}$ (minutes) induced by a slot allocator (e.g., CASA) at $v_h$, the flights in $\CC$ that would have entered the hotspot around bin $t$ are expected to load another TV $v$ around the aligned bin
\[
\phi(v,t;M) \;\equiv\; b\!\bigl(t\Delta + M - \Delta_{v,h}\bigr) \;\in\; \mathcal{T}.
\]
Define the flow’s $M$-slack as the worst-case residual capacity, across its footprint:
\begin{equation}
	\label{eq:flow_slack}
	\mathrm{Slack}_M(\CC \mid h,s)
	\;\equiv\;
	\min_{\substack{v \in \mathcal{V}(\CC)\\ t \in I_h}}
	\Bigl\{
	C_{v,\,\phi(v,t;M)}
	\;-\;
	D_{v,\,\phi(v,t;M)}^{(s)}
	\;+\;
	D_{v,\,\phi(v,t;M)}(\CC)
	\Bigr\}.
\end{equation}
Here:
\begin{itemize}
	\item $\mathcal{V}(\CC)=\bigcup_{f\in\CC}\mathcal{V}(f)$ is the set of TVs traversed by at least one flight in $\CC$;
	\item $I_h=[t_{i,h}-\Delta_h,\,t_{f,h}]_{\mathbb{Z}} \cap \mathcal{T}$ is the hotspot influence interval (Section~\ref{subsec:proposal_engine});
	\item $D_{v,t}^{(s)}$ is the current rolling-hour demand at $(v,t)$ under plan $s$;
	\item $D_{v,t}(\CC)$ is the rolling-hour contribution of $\CC$ at $(v,t)$ as in Section~\ref{subsec:proposal_engine}.
\end{itemize}
The formulation can be interpreted as computing the residual capacity after removing the flow's own contribution from the total demand. In other words, it measures how much spare capacity remains if the flow were not participating in the regulation. By taking the minimum over all $v \in \mathcal{V}(\CC)$ and $t \in I_h$, \eqref{eq:flow_slack} identifies the most restrictive point along the flow’s trajectory, or the tightest bottleneck it would encounter when a regulation is applied at $v_h$ with the nominal delay $M$.

In our framework, the regulation of a flow $\CC$ does not change its TV footprint $\mathcal{V}(\CC)$; it primarily shifts the entry times of its flights. Consequently, any secondary hotspot that a regulation might induce must occur within $\mathcal{V}(\CC)$. We define two measures:
\[
\mathrm{\texttt{Slack15}}(\CC \mid h,s) \;\equiv\; \mathrm{Slack}_{M=15}(\CC \mid h,s), 
\qquad
\mathrm{\texttt{Slack30}}(\CC \mid h,s) \;\equiv\; \mathrm{Slack}_{M=30}(\CC \mid h,s).
\]
\begin{itemize}
	\item \texttt{Slack15} reflects how easily the flow can be modestly metered (small rate reduction) without creating new overloads elsewhere. A positive value indicates sufficient headroom across the footprint for approximately 15 minutes of additional delay at the hotspot.
	\item \texttt{Slack30} reflects the robustness of more aggressive metering (larger rate reduction).
\end{itemize}

Finally, we define a surrogate function for flow selection: 
\begin{equation}\label{eq:flow_selection_heuristic}
\Phi(\CC_j) = w_{pressure} P_h (\CC_j | s) + w_{Slack15} Slack15(\CC_j) + w_{Slack30} Slack30(\CC_j)
\end{equation}

\subsection{Regulations Proposing}
Given a hotspot $h=(v_h,t_{i,h},t_{f,h})$ selected from the current partial plan $s$, we perform a targeted local search that converts the flow structure around $h$ into a small set of high-value regulation candidates. The search returns up to $K$ proposals $\rho=(v_h,t_{i,h},t_{f,h},\tau)$, each evaluated with the FPFS slot allocator (Section~\ref{subsec:fpfs_slot}) on the full traffic set and scored by its incremental improvement $\Delta J \equiv J(s)-J(s\cup\rho)$.

\paragraph{Inputs and Hyperparameters.}
\begin{itemize}
	\item Flow set $\mathcal{P}_h = \{\CC_1, \ldots, \CC_R\}$ - the set of candidate flows extracted for hotspot $h$.
	\item max\_flows\_in\_regulation $(M_{\max})$:  maximum number of flows allowed to be jointly regulated (default: $5$).
	\item top-$k$ proposals $(k_{\mathrm{top}})$: number of best-performing regulation proposals returned to the MCTS.
	\item min\_flights\_per\_flow $(N_{\min})$: minimum number of flights a flow must contain to be considered eligible.
	\item Grid of rate factors $\Gamma$: discrete set of candidate rate multipliers used for grid search, e.g.\ $\Gamma = \{0.6, 0.7, 0.8, 0.9, 1.0\}$.
\end{itemize}

\paragraph{Rate initialization.}
Let $C_{\min}(h)\equiv \min_{t\in I_h^{\mathrm{eff}}} C_{v_h,t}$ be a conservative per-hour capacity bound for the effective window. For a set of flows $S\subseteq \mathcal{P}_h$, define the demand share within $I_h^{\mathrm{eff}}$ as
\[
p(S\,|\,s)\;\equiv\; 
\frac{\sum_{t\in I_h^{\mathrm{eff}}} \omega_t\, D_{v_h,t}(S)}
{\sum_{t\in I_h^{\mathrm{eff}}} \omega_t\, D_{v_h,t}^{(s)}},
\qquad 
\omega_t\;\equiv\; \bigl(D_{v_h,t}^{(s)}-C_{v_h,t}\bigr)_+ + \varepsilon,
\]
with a small $\varepsilon > 0$ added to avoid division by zero. The weights $\omega_t$ emphasize bins experiencing positive overloads. The nominal (or baseline) rate is defined as 
\[
\tau_0(S) \;\equiv\; \mathrm{round}\bigl( p(S\,|\,s) \bigr),
\]
and the set of candidate rates explored during grid search is given by multiplicative perturbations of this nominal value:
\[
\mathcal{T}(S) \;=\; 
\bigl\{\, \tau_0(S) \cdot (1+\gamma) \;:\; \gamma \in \Gamma \cup (-\Gamma) \,\bigr\}.
\]

The search explores unions of the top-$r$ flows (by $\phi$) for $r=1,\ldots,R_{\max}$ and, for each union, evaluates a set of rates around $\tau_0$. Each candidate regulation is simulated via FPFS on the full traffic set, and scored by $\Delta J$.

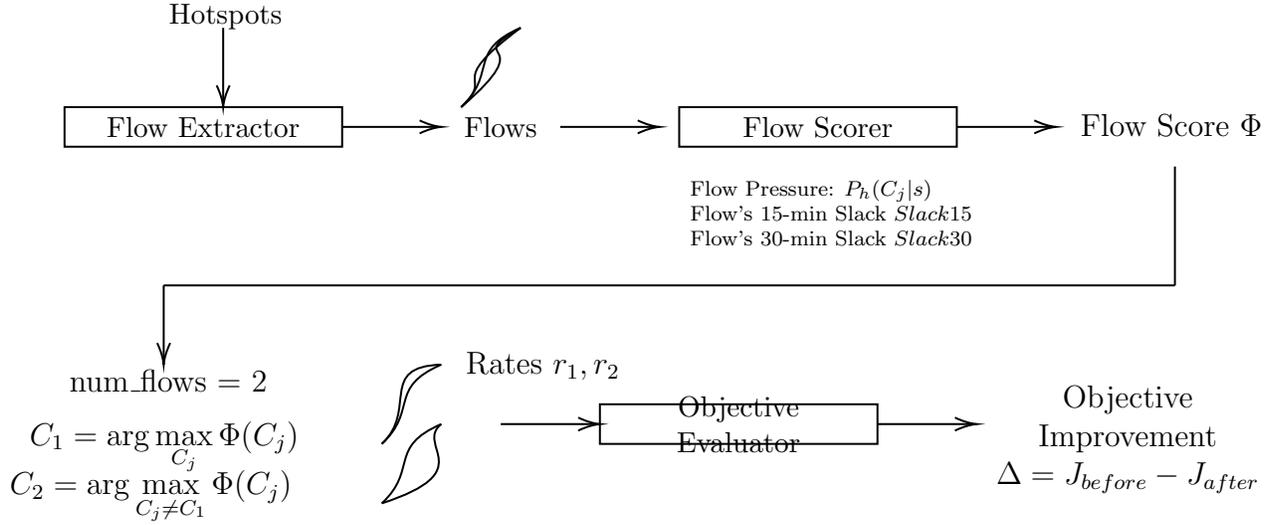
\begin{figure}
	\centering
	\begin{tikzpicture}[x=0.75pt,y=0.75pt,yscale=-1,xscale=1]
	
	\draw   (40,110) -- (180,110) -- (180,130) -- (40,130) -- cycle ;
	\draw    (180,120) -- (228,120) ;
	\draw [shift={(230,120)}, rotate = 180] [color={rgb, 255:red, 0; green, 0; blue, 0 }  ][line width=0.75]    (10.93,-3.29) .. controls (6.95,-1.4) and (3.31,-0.3) .. (0,0) .. controls (3.31,0.3) and (6.95,1.4) .. (10.93,3.29)   ;
	\draw    (120,70) -- (120,108) ;
	\draw [shift={(120,110)}, rotate = 270] [color={rgb, 255:red, 0; green, 0; blue, 0 }  ][line width=0.75]    (10.93,-3.29) .. controls (6.95,-1.4) and (3.31,-0.3) .. (0,0) .. controls (3.31,0.3) and (6.95,1.4) .. (10.93,3.29)   ;
	\draw   (350,110) -- (490,110) -- (490,130) -- (350,130) -- cycle ;
	\draw    (290,120) -- (338,120) ;
	\draw [shift={(340,120)}, rotate = 180] [color={rgb, 255:red, 0; green, 0; blue, 0 }  ][line width=0.75]    (10.93,-3.29) .. controls (6.95,-1.4) and (3.31,-0.3) .. (0,0) .. controls (3.31,0.3) and (6.95,1.4) .. (10.93,3.29)   ;
	\draw    (490,120) -- (538,120) ;
	\draw [shift={(540,120)}, rotate = 180] [color={rgb, 255:red, 0; green, 0; blue, 0 }  ][line width=0.75]    (10.93,-3.29) .. controls (6.95,-1.4) and (3.31,-0.3) .. (0,0) .. controls (3.31,0.3) and (6.95,1.4) .. (10.93,3.29)   ;
	\draw    (260,270) -- (308,270) ;
	\draw [shift={(310,270)}, rotate = 180] [color={rgb, 255:red, 0; green, 0; blue, 0 }  ][line width=0.75]    (10.93,-3.29) .. controls (6.95,-1.4) and (3.31,-0.3) .. (0,0) .. controls (3.31,0.3) and (6.95,1.4) .. (10.93,3.29)   ;
	\draw   (310,260) -- (450,260) -- (450,280) -- (310,280) -- cycle ;
	\draw    (450,270) -- (498,270) ;
	\draw [shift={(500,270)}, rotate = 180] [color={rgb, 255:red, 0; green, 0; blue, 0 }  ][line width=0.75]    (10.93,-3.29) .. controls (6.95,-1.4) and (3.31,-0.3) .. (0,0) .. controls (3.31,0.3) and (6.95,1.4) .. (10.93,3.29)   ;
	\draw    (600,140) -- (600,200) ;
	\draw    (90,200) -- (600,200) ;
	\draw    (90,200) -- (90,238) ;
	\draw [shift={(90,240)}, rotate = 270] [color={rgb, 255:red, 0; green, 0; blue, 0 }  ][line width=0.75]    (10.93,-3.29) .. controls (6.95,-1.4) and (3.31,-0.3) .. (0,0) .. controls (3.31,0.3) and (6.95,1.4) .. (10.93,3.29)   ;
	\draw    (240,110) .. controls (258.33,93) and (230,100) .. (270,70) ;
	\draw    (240,110) .. controls (258.33,93) and (243,81.67) .. (270,70) ;
	\draw    (240,110) .. controls (281.67,74.33) and (239,87.67) .. (270,70) ;
	\draw    (200,280) .. controls (218.33,263) and (203,251.67) .. (230,240) ;
	\draw    (200,310) .. controls (250.33,293.67) and (216.33,281) .. (230,270) ;
	\draw    (200,280) .. controls (218.33,263) and (197.67,241) .. (230,240) ;
	\draw    (200,310) .. controls (218.33,293) and (203,281.67) .. (230,270) ;
	
	\draw (110,120) node  [font=\small] [align=left] {\begin{minipage}[lt]{95.2pt}\setlength\topsep{0pt}
			\begin{center}
				Flow Extractor
			\end{center}
			
	\end{minipage}};
	\draw (260,120) node  [font=\small] [align=left] {\begin{minipage}[lt]{40.8pt}\setlength\topsep{0pt}
			\begin{center}
				Flows
			\end{center}
			
	\end{minipage}};
	\draw (120,60) node  [font=\small] [align=left] {\begin{minipage}[lt]{40.8pt}\setlength\topsep{0pt}
			\begin{center}
				Hotspots
			\end{center}
			
	\end{minipage}};
	\draw (420,120) node  [font=\small] [align=left] {\begin{minipage}[lt]{95.2pt}\setlength\topsep{0pt}
			\begin{center}
				Flow Scorer
			\end{center}
			
	\end{minipage}};
	\draw (354,145) node [anchor=north west][inner sep=0.75pt]  [font=\scriptsize] [align=left] {Flow Pressure: $\displaystyle P_{h}( C_{j} |s)$\\Flow's 15-min Slack $\displaystyle Slack15$\\Flow's 30-min Slack $\displaystyle Slack30$};
	\draw (551,112) node [anchor=north west][inner sep=0.75pt]   [align=left] {Flow Score $\displaystyle \Phi $};
	\draw (41,241) node [anchor=north west][inner sep=0.75pt]   [align=left] {num\_flows = 2};
	\draw (21,267) node [anchor=north west][inner sep=0.75pt]   [align=left] {$\displaystyle C_{1} =\arg\max_{C_{j}} \Phi ( C_{j})$};
	\draw (11,292) node [anchor=north west][inner sep=0.75pt]   [align=left] {$\displaystyle C_{2} =\arg\max_{C_{j} \neq C_{1}} \Phi ( C_{j})$};
	\draw (380,270) node  [font=\small] [align=left] {\begin{minipage}[lt]{95.2pt}\setlength\topsep{0pt}
			\begin{center}
				Objective Evaluator
			\end{center}
			
	\end{minipage}};
	\draw (503,249) node [anchor=north west][inner sep=0.75pt]   [align=left] {\begin{minipage}[lt]{107.6pt}\setlength\topsep{0pt}
			\begin{center}
				Objective Improvement\\$\displaystyle \Delta =J_{before} -J_{after}$
			\end{center}
			
	\end{minipage}};
	\draw (241,232) node [anchor=north west][inner sep=0.75pt]   [align=left] {Rates $\displaystyle r_{1} ,r_{2}$};

\end{tikzpicture}
	\caption{The Regulation Proposal Engine's Main Components, with \texttt{num\_flows = 2}.}
	\label{fig:regen}
\end{figure}

Figure \ref{fig:regen} provides a high-level view of the entire data-flows within the Regulation Proposal engine employed by Regulation Zero.
%


\subsection{RegulationZero: Hierarchical Monte-Carlo Tree Search for Regulation Sequence Optimization}
\label{subsec:regzero-mcts}
\subsubsection{Flow Regulations as a Markov Decision Process Problem}
As discussed earlier, it is essential to solve the traffic regulations in sequence. This motivates us to formulate the regulation search as planning in a finite-horizon Markov decision process (MDP)
\(
\mathcal{M} = (\mathcal{S}, \mathcal{A}, P, R, \gamma)
\),
with state space \(\mathcal{S}\), \emph{joint} action space
\(\mathcal{A} = \mathcal{R}\times\mathcal{H}\) where \(a=(r,h)\) combines a regulation proposal \(r\in\mathcal{R}\) and a hotspot \(h\in\mathcal{H}\),
transition kernel \(P(\cdot \mid s,a)\),
reward \(R(s,a) \doteq \Delta J(s,a)\) given by the objective improvement
(weighted reduction of exceedances and delay),
and discount \(\gamma \in (0,1]\).
For an episode \(s_0,s_1,\dots,s_T\), the objective is to maximize the expected discounted return
\begin{equation}
	J^\pi(s_0) \;=\; \mathbb{E}_\pi\!\left[ \sum_{t=0}^{T-1} \gamma^t \,\Delta J_t \right],
	\quad \text{with } s_{t+1}\sim P(\cdot \mid s_t,a_t), \; a_t\sim \pi(\cdot\mid s_t).
	\label{eq:objective}
\end{equation}

Because applying regulations results in a deterministic outcome:
\begin{equation*}
	P(s_{t+1} | s_t, a_t) = \delta_{\{s = s_{t+1}\}}.
\end{equation*}

\begin{figure}
	\centering
	\input{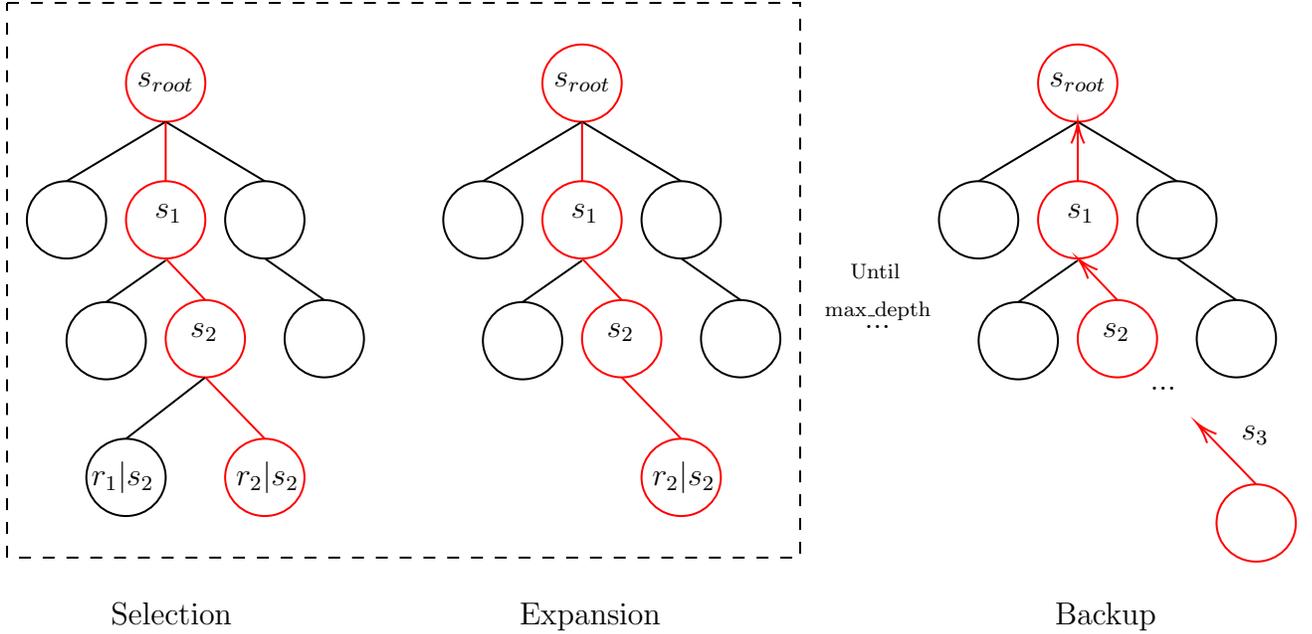}
	\caption{Illustration of the three main phases of the MCTS algorithm employed by RegulationZero.}
	\label{fig:mcts}
\end{figure}

The key steps of our MCTS implementation version can be described in Figure \ref{fig:mcts}. 

\subsubsection{Hierarchical action and priors.}
Because the flow extraction result $\mathcal{P}_h$ is tied to the selected hotspot segment $h$, we factor the search prior policy as
\[
\pi(a\mid s) \;=\; \pi_H(h\mid s)\,\pi_R(r\mid h,s),
\]
and assume that the priors $P_H(h\mid s)>0$ and $P_R(r\mid h,s)>0$. 

Let $\mathcal{H}(s)$ be the set of hotspot segments. We define the severity of a hotspot as the cumulative exceedance across its segment under plan $s$:
\begin{equation}\label{eq:phi_h_def}
	\phi(h \mid s)
	\;\equiv\;
	\sum_{t \in [t_{i,h},\,t_{f,h}]_{\mathbb{Z}}}
	\Bigl( D_{v_h,t}^{(s)} - C_{v_h,t} \Bigr)_{+},
\end{equation}
where $(x)_{+} \equiv \max\{x,0\}$.

We sample hotspots according to a tempered softmax:
\begin{equation}\label{eq:PH}
	P_H(h \mid s)
	\;=\;
	\frac{\exp\!\bigl(\phi(h\mid s)/\tau_H\bigr)}
	{\displaystyle \sum_{h'\in \mathcal{H}(s)} \exp\!\bigl(\phi(h'\mid s)/\tau_H\bigr)}
\end{equation}

Conditioned on $h$, the \texttt{RegulationProposal} engine returns a finite candidate set
$\mathcal{R}(h,s)=\{r_1,\ldots,r_m\}$ together with predicted immediate improvements $\widehat{\Delta J}(s,h,r)$. Then, the regulation to be selected will follow the PUCT rule.

\subsubsection{PUCT-driven proposal selection}
Within the search tree, we select proposals using a PUCT criterion instantiated for the \((h,r)\) branching factorization:
\begin{equation}
	r^* \;=\; \arg\max_{r}
	\left[
	Q(s,h,r) \;+\; c_{\mathrm{PUCT}}\, P_R(r\mid h,s)\,
	\frac{\sqrt{\sum_b n(s,h,b)}}{1+n(s,h,r)}
	\right],
	\label{eq:puct_selection}
\end{equation}
where \(Q(s,h,r)\) is the current value estimate and \(n(s,h,r)\) counts visits to child \((s,h,r)\).

The prior \(P_R(r\mid h,s)\) guides the search process to explore new regulations in the beginning of the search phase, but gradually subsides, rendering the policy becoming more greedy as total visits $\sum_b n(s,h,b)$ increase.

After a regulation is selected and applied, the corresponding delays are imposed on the affected flights, and the flight list is updated accordingly. This update triggers a re-evaluation of active hotspots and a new flow extraction step. The reward obtained at iteration $k$ is defined as $\Delta J_k = \hat{\Delta J}(s, h, r^*)$. The algorithm then continues to extend the sequence until reaching a predefined search depth.

\subsubsection{Value backups}
At the end of the trajectory, the cumulative discounted reward is:
\begin{equation}
G \!=\! \sum_{k} \gamma^{k-1}\Delta J_k.
\end{equation}

For each visited node on the path, we perform the following updates:
\begin{align}
	n(s,h,r) &\leftarrow n(s,h,r) + 1,\\
	W(s,h,r) &\leftarrow W(s,h,r) + G,\\
	Q(s,h,r) &\leftarrow \frac{W(s,h,r)}{n(s,h,r)}.
\end{align}

All the steps of the MCTS can be summarized in Algorithm \ref{alg:regzero}.

\begin{algorithm}[t]
	\caption{RegulationZero: Hierarchical, KL-Regularized MCTS}
	\label{alg:regzero}
	\begin{algorithmic}[1]
		\State \textbf{Input:} priors \(P_H, P_R\); severity \(\phi\); discount \(\gamma\); \(c_{\mathrm{PUCT}}\); \texttt{max\_depth}
		\For{simulation $=1,\dots,N$}
		\State $s \leftarrow s_0$; path $\leftarrow [\;]$; $d \leftarrow 0$
		\While{$d < \texttt{max\_depth}$ and $s$ not terminal}
		\State \textbf{Hotspot:} sample $h \sim \pi_H(\cdot\mid s) \propto P_H(\cdot\mid s)\exp(\phi/\tau_H)$
		\If{some $(s,h, \cdot)$ unexpanded} \State retrieve/update $\{r(h)\}$ via \texttt{RegulationProposal} \EndIf
		\State \textbf{Proposal:} select $r$ by \eqref{eq:puct_selection}
		\State Append $(s,h,r)$ to path
		\State Execute $a=(h,r)$; observe $\Delta J$; sample $s' \sim P(\cdot\mid s,a)$
		\State $s \leftarrow s'$; $d \leftarrow d+1$
		\EndWhile
		\State Compute return $G \leftarrow \sum_{t=0}^{d-1} \gamma^t \Delta J_t$
		\For{each node $(s,h,r)$ in path} \State update $n,W,Q$ with $G$ \EndFor
		\EndFor
		\State \textbf{return} action at root with largest visit count or value
	\end{algorithmic}
\end{algorithm}

\subsection{Comments on Architecture Design}
In this section, we provide an overarching rationale for the design of our framework. Conventional approaches in reinforcement learning (RL) studies often formulate the action space around atomic operations, such as adding or removing a regulation. However, our experiments indicate that this design frequently leads to cyclic exploration patterns, as the vast combinatorial space of possible regulatory modifications causes the policy to become trapped in repetitive loops. While one could impose a simulation budget to curb unbounded exploration, the long trajectory lengths inherent to such settings substantially weaken the gradient signal during backpropagation, thereby impeding effective learning.

The role of the regulation proposal (RP) is fundamental in ensuring the stability of the MCTS search process. One valid view of RP is that RP is the local-search extension of MCTS's exploration at each node. Not only would this help narrowing down the search space, but 

Following the analysis presented in Appendix~\ref{sec:poliopt}, \textit{RegulationZero} optimizes a KL-regularized objective that promotes structured exploration at both the hotspot and regulation levels:
\begin{equation}
	\Omega_s(\pi) = \tau_h \, \mathrm{KL}\!\left[\pi_H(\cdot\,|\,s) \,\|\, P_H(\cdot\,|\,s)\right] 
	+ \mathbb{E}_{h \sim \pi_H}\!\left[\lambda_R (s,h) \, \mathrm{KL}\!\left[P_R(\cdot\,|\,h,s) \,\|\, \pi_R(\cdot\,|\,h,s)\right]\right].
\end{equation}

The first term, $\tau_h \, \mathrm{KL}[\pi_H(\cdot|s) \| P_H(\cdot|s)]$, represents a reverse KL-divergence akin to that used in the Soft Actor-Critic (SAC) formulation \cite{haarnoja2018soft}, favoring mode-seeking behavior that concentrates attention on the most critical hotspots $h$.

In contrast, the second term, $\mathrm{KL}[P_R(\cdot|h,s) \| \pi_R(\cdot|h,s)]$, takes the form of a forward KL-divergence, encouraging mode-covering exploration within the regulation space.

Conceptually, this separation guides the policy to focus relief efforts on the most severe hotspots, while simultaneously enabling a comprehensive and diverse search across the regulatory landscape. The empirical results discussed in the following section illustrate how these dual objectives manifest in practice, notably through the temporary steps that deliberately ``worsens the situation'' before improving it, despite better options were available at disposal.

\section{Results}
To validate the proposed framework, we conducted a comprehensive set of evaluations using planned flight plan data obtained from EUROCONTROL’s DDR2 exported from NEST. However, we reimplemented the Simulator (What-If engine) instead of using R-NEST for CASA on our open ATFM research platform Cortex \footnote{Source code is available at \url{https://github.com/thinhhoang95/project-cortex}.}. Performance was benchmarked against a Simulated Annealing (SA) baseline to assess both solution quality and computational efficiency. All experiments were carried out on a dedicated cloud instance equipped with an AMD Ryzen Threadripper 7960X processor and 64 GB of RAM. All implementations are realized in Python 3.13 with Numba. All objective values are reported with weight $w_{CAP} = 10, w_{DELAY} = 1$ or one occupancy exceedance is equivalent to 10 minutes of delay.
\subsection{Performance \& Results at Scales}
For this section, we focus on comparing the performance and optimality of solutions of RegulationZero compared against the SA baseline. We employ the initial traffic dataset consisting of 23,089 flights from midnight to end-of-day of 23rd July, 2023.
\begin{table}[htbp]
	\centering 
	\caption{Hyperparameter Configuration for RegulationZero}
	\label{tab:hyperparams}
	\begin{tabular}{ll}
		\toprule
		\textbf{Parameter} & \textbf{Value} \\
		\midrule
		sims & 128 \\
		depth & 64 \\
		commit\_depth & 64 \\
		flows\_threshold & 0.6 \\
		flows\_resolution & 1.0 \\
		max\_hotspots\_per\_node & 12 \\
		k\_proposals\_per\_hotspot & 5 \\
		puct\_c & 64 \\
		gamma & 0.999998 \\
		regulation\_selection\_softmax\_temperature & 24.0 \\
		hotspot\_sampling\_temperature & 6.0 \\
		\bottomrule
	\end{tabular}
\end{table}

\begin{table}[htbp]
	\centering
	
	\caption{Hyperparameters used for the optimization algorithm.}
	\label{tab:hyperparameterssa}
	\begin{tabularx}{0.5\linewidth}{X c}
		\toprule
		\textbf{Parameter Name} & \textbf{Value} \\
		\midrule
		{iters} & $10,000$ \\
		{T0} (Initial Temperature) & $15.0$ \\
		{cooling} (Cooling Rate) & $0.999$ \\
		{T\_min} (Minimum Temperature) & $1 \times 10^{-9}$ \\
		{max\_delay\_per\_flight\_min} & $240$ \\
		\bottomrule
	\end{tabularx}
\end{table}
The hyperparameters of RegulationZero are set according to Table \ref{tab:hyperparams}, the SA hyperparameters are given in Table \ref{tab:hyperparameterssa}. These hyperparameters were refined from the automatically tuned values retrieved from Optuna with 15 run trials to give the best results for both algorithms.

We divide the flight set into three run cases: the three-hour case that spans from 10:00 to 13:00 where most hotspots occurred, the six-hour case that spans from 10:00 to 16:00, and the whole day of operation case. The goal is to test the effectiveness of RegulationZero against the problem size. A sample of RZ's output can be seen from Figure \ref{fig:regulation_trajectory}.

\begin{figure}[htb!]
	\centering
	\begin{subfigure}[b]{0.99\linewidth} 
		\centering
		\includegraphics[width=\linewidth]{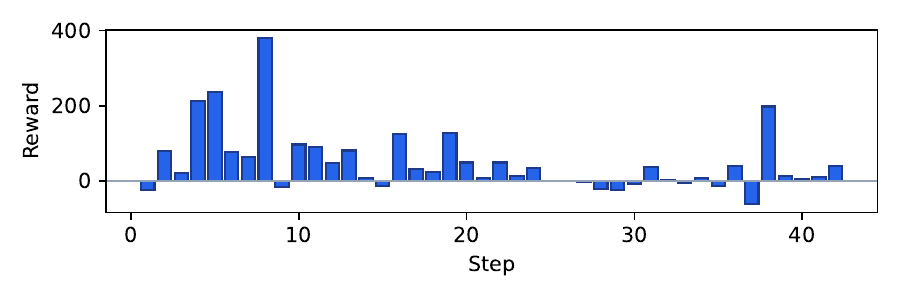}
		\caption{3h case}
		\label{fig:3hstagerewards}
	\end{subfigure}
	\begin{subfigure}[b]{0.99\linewidth}
		\centering
		\includegraphics[width=\linewidth]{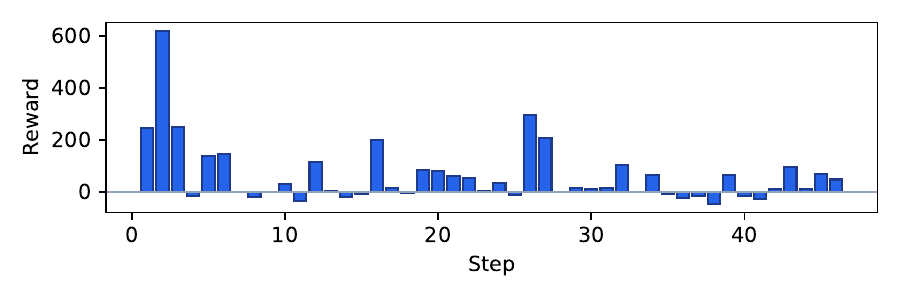}
		\caption{6h case}
		\label{fig:6hstagerewards}
	\end{subfigure}=
	\begin{subfigure}[b]{0.99\linewidth}
		\centering
		\includegraphics[width=\linewidth]{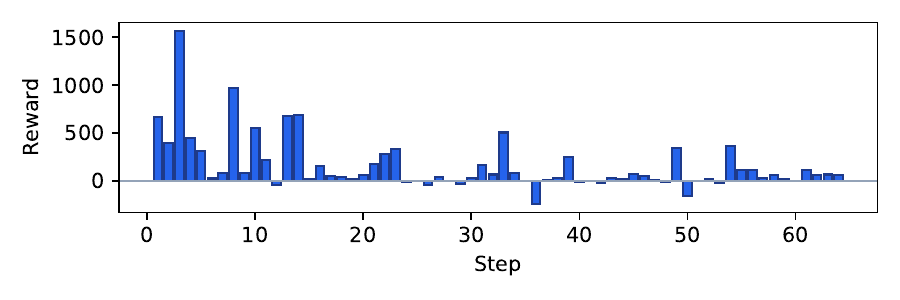}
		\caption{Whole-day case}
		\label{fig:24hstagerewards}
	\end{subfigure}
	
	\caption{Per-step reward obtained by RegulationZero for the optimal regulation sequence for the 3h, 6h, and whole-day cases, respectively.}
	\label{fig:combinedstagerewards} 
\end{figure}

When the commit depth is set to 64, we observe that RegulationZero tends to exhaust its available regulatory commit budget in an effort to further optimize the objective function. In the 3h case, which contains the largest number of hotspots, it is challenging for the regulatory mechanism to fully dissipate congestion without merely shifting it elsewhere. RegulationZero often deploys numerous small-scale regulations that yield limited or even negative direct benefit, yet help reorient traffic patterns so that subsequent regulatory actions can exert a more substantial effect (Figure \ref{fig:3hstagerewards}). This pattern is particularly evident for regulations applied later in the sequence. Similar patterns can be seen in the 6h and the whole-day case (Figure \ref{fig:6hstagerewards} and \ref{fig:24hstagerewards}), though the late ``impactful'' regulations did not occur as these cases were generally easier to solve without lots of traffic reorientation.

The mode-seeking behavior of hotspots is clearly reflected in these reward trajectories: the most impactful regulations are typically executed first and target the most severe hotspots. Consequently, RegulationZero, like many other search and optimization algorithms, exhibits anytime performance, meaning that the search can be terminated early when a less optimal but acceptable solution suffices.

\begin{table}[htbp]
	\centering 
	\caption{Comparison of RZ Performance on Three Problem Sizes}
	\label{tab:rz_meta}
	\begin{tabularx}{\textwidth}{l >{\centering\arraybackslash}X >{\centering\arraybackslash}X >{\centering\arraybackslash}X}
		\toprule 
		\textbf{Criteria} & \textbf{3h} & \textbf{6h} & \textbf{Whole-day} \\
		\midrule 
		Number of Regulations & 68 & 67 & 100 \\
		Number of Flights Involved & 1193 & 1156 & 2145 \\
		Max Run Time (s) & 12742 & 10144 & 12801 \\
		\bottomrule 
	\end{tabularx}
\end{table}

\begin{table}[htbp]
	\centering 
	\caption{Comparison of SA Performance on Three Problem Sizes}
	\label{tab:sa_meta}
	\begin{tabularx}{\textwidth}{l >{\centering\arraybackslash}X >{\centering\arraybackslash}X >{\centering\arraybackslash}X}
		\toprule 
		
		\textbf{Criteria} & \textbf{3h} & \textbf{6h} & \textbf{Whole Day} \\
		
		\midrule 
		
		Acceptance Rate & 0.482 & 0.489 & 0.485 \\
		Max Run Time (s) & 6419 & 6552 & 6448 \\		
		\bottomrule 
	\end{tabularx}
\end{table}

\begin{table}[htbp]
	\centering 
	\caption{Comparison of Results for RZ and SA Methods}
	\label{tab:rzsa}
	\begin{tabularx}{\textwidth}{l *{6}{>{\centering\arraybackslash}X}}
		\toprule 
		\textbf{Case} & 
		\multicolumn{2}{c}{\textbf{Final Objective Improvements}} & 
		\multicolumn{2}{c}{\textbf{Total Delay (min)}} &
		\multicolumn{2}{c}{\textbf{Number of Flights}} \\
		
		\cmidrule(lr){2-3} \cmidrule(lr){4-5} \cmidrule(lr){6-7} 
		
		{} & \textbf{RZ} & \textbf{SA} & \textbf{RZ} & \textbf{SA} & \textbf{RZ} & \textbf{SA}  \\
		\midrule 
		
		3h & 2017 & \textbf{4061} & 3823 & \textbf{1439} & \textbf{243} & 308 \\
		6h & 2859 & \textbf{4157} & 4061 & \textbf{1393} & \textbf{266} & 312 \\ 
		Whole Day & \textbf{9888} & 6452 & {8122} & \textbf{1688} & 525 & \textbf{344} \\
		\bottomrule 
	\end{tabularx}
\end{table}

\begin{figure}[h]
	\centering
	
	\begin{subfigure}[b]{0.49\linewidth} 
		\centering
		\includegraphics[width=\textwidth]{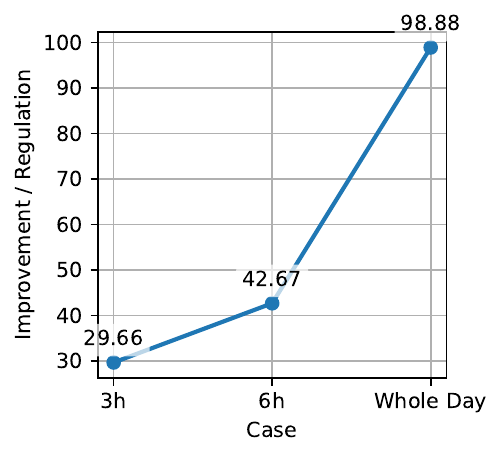}
		\caption{Efficiency per Regulation.}
		\label{fig:rzsaefficiencyperregulation}
	\end{subfigure}
	\hfill 
	%
	\begin{subfigure}[b]{0.49\linewidth} 
		\centering
		\includegraphics[width=\textwidth]{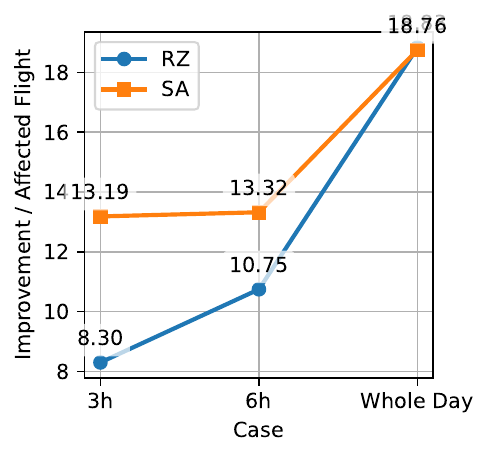}
		\caption{Efficiency per Affected Flight.}
		\label{fig:rzsaefficiencyperaffectedflight}
	\end{subfigure}
	
	\caption{Efficiency Metrics of RZ and SA.} 
	\label{fig:combined_efficiency_comparisons}
\end{figure}

RegulationZero and SA's problem solving abilities can be summarized in Table \ref{tab:rzsa} with additional performance information provided in Table \ref{tab:rz_meta} and Table \ref{tab:sa_meta}. As the planning horizon expands, RZ demonstrates pronounced efficiency at scale. Its improvement per minute of delay more than doubles (0.53 to 0.70, and then to 1.22 for whole-day scale), while the improvement per regulation increases roughly 3.3 times (from roughtly 30 to 43, and then to 99). At the full-day horizon, RZ achieves near parity in{improvement per affected flight with SA baseline: 18.83 vs.~18.76, despite the additional FPFS constraint (Figure \ref{fig:combined_efficiency_comparisons}). This indicates that when RZ does impose delays, their marginal utility is comparable to those generated by SA.

A particularly distinctive trait of RZ is its selectivity: only about {20 to 25\%} of flights involved in its regulations experience actual delays (20.4\%, 23.0\%, 24.5\%), meaning the majority of flights touched by the regulation plan remain on schedule. For shorter horizons (3h/6h), RZ delays {21\%} and {15\%} fewer flights than SA (243 vs.~308; 266 vs.~312) while still securing substantial objective gains.

\begin{figure}[h!]
	\centering
	\includegraphics[width=0.7\linewidth]{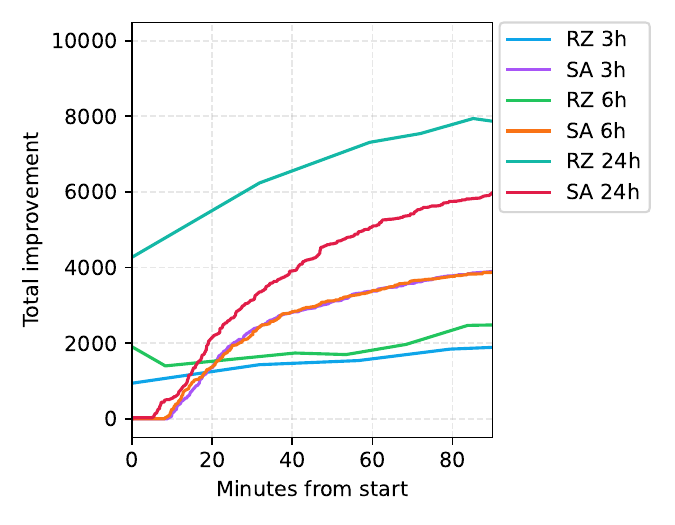}
	\caption{Evolution of Objective Improvements with time for both RZ and SA, capped to a 90-minute compute budget. SA curves observed delay due to heat-up phase.}
	\label{fig:intraprogresscomparison}
\end{figure}

Figure~\ref{fig:intraprogresscomparison} illustrates the evolution of solution quality over time. The results clearly indicate that RZ exhibits weaker performance on smaller-scale problems, where SA dominates once the compute time exceeds approximately 20 minutes. In contrast, for large-scale instances, RZ consistently outperforms SA across the entire runtime horizon, demonstrating strong anytime characteristics. This property makes RZ appealing for tactical deployment scenarios, where large-scale problems must be resolved frequently under stringent computational time constraints, or very large scale problem that needs to be solved pre-tactically.

\subsection{Hyperparameter and Ablation Study}
As RegulationZero involves a multitude of tunable parameters, we concentrate on a subset of hyperparameters that exert the most significant influence on its performance. Consistent with standard RL frameworks, our analysis emphasizes those parameters that govern the trade-off between exploration and exploitation.

\subsubsection{Number of Hotspots}
To assess the impact of constraining RegulationZero to the top-$k$ hotspots ranked by severity, we execute the algorithm using the hyperparameter configurations listed in Table~\ref{tab:hyperparams}, with max\_hotspots\_per\_node set to 3, 6, and 12, respectively. The corresponding benchmark results are summarized in Table~\ref{tab:rz_hotspots}. It is worth noting that the outcomes for the 12-hotspot configuration may exhibit slight variations relative to those reported in the preceding section, owing to the inherent stochasticity of the algorithm.

\begin{table}[htbp]
	\centering 
	\caption{Comparison of Results for 3, 6, and 12 hotspots allowed to be selected by RZ.}
	\label{tab:rz_hotspots}
	\begin{tabularx}{\textwidth}{l *{4}{>{\centering\arraybackslash}X}}
		\toprule 
		\textbf{Case} & \textbf{Final Objective Improvements} & \textbf{Total Delay (min)} & \textbf{Number of Regulations} & \textbf{Number of Flights} \\
		
		\midrule
		
		3 hotspots & 817 & 5063 & 24 & 324 \\
		6 hotspots & 4929 & 7141 & 61 & 448 \\
		12 hotspots & 11264 & 7686 & 82 & 470 \\
		\bottomrule 
	\end{tabularx}
\end{table}

\begin{figure}[h]
	\centering
	
	\begin{subfigure}[b]{0.49\linewidth} 
		\centering
		\includegraphics[width=\textwidth]{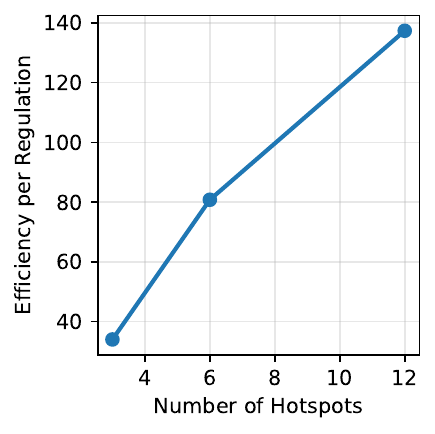}
		\caption{Efficiency per Regulation.}
		\label{fig:hsp_efficiency_per_regulation}
	\end{subfigure}
	\hfill 
	%
	\begin{subfigure}[b]{0.49\linewidth} 
		\centering
		\includegraphics[width=\textwidth]{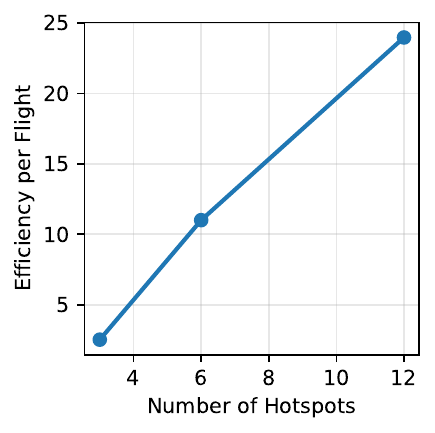}
		\caption{Efficiency per Affected Flight.}
		\label{fig:hsp_efficiency_per_flight}
	\end{subfigure}
	
	\caption{Efficiency Metrics' Relationship with Number of Hotspots Allowed to be Discovered by RZ.} 
	\label{fig:hsp_efficiency}
\end{figure}

The results indicate that restricting RegulationZero to only the most severe hotspots substantially diminishes its capacity to resolve DCB imbalances. Despite the modest expansion in the number of delayed flights, the overall improvement more than doubles suggesting that a broader hotspot set enables the discovery of high-leverage interventions. This trend is further reflected in the reduction of flights regulated per action, from 13.5 to 5.73 indicating that the resulting regulations become more targeted and efficient. These results point in the same direction as noted by \cite{Dalmau2021Indispensable} that only a few regulations possess significant leverage on the DCB situation, and these regulations do not usually coincide with the most severe hotspots.

\subsubsection{PUCT Term and Number of Simulations}
These two hyperparameters control the amount of exploration. In our first experiment batch, we keep the number of simulations constant, while increasing PUCT. The results are summarized in Table \ref{tab:rz_puct}.

\begin{table}[htbp]
	\centering 
	\caption{Comparison of Results for different PUCT settings.}
	\label{tab:rz_puct}
	\begin{tabularx}{\textwidth}{l *{4}{>{\centering\arraybackslash}X}}
		\toprule 
		\textbf{Case} & \textbf{Final Objective Improvements} & \textbf{Total Delay (min)} & \textbf{Number of Regulations} & \textbf{Number of Flights} \\
		
		\midrule
		
		PUCT=32+SIMS=12 & 10899 & 8621 & 85 & 534 \\
		PUCT=64+SIMS=12 & 10779 & 9191 & 96 & 584 \\
		PUCT=128+SIMS=12 & 9194 & 8356 & 83 & 554 \\
		\midrule
		PUCT=32+SIMS=4 & 9233 & 9317 & 89 & 580 \\
		PUCT=64+SIMS=8 & 9901 & 8589 & 79 & 545 \\
		PUCT=128+SIMS=16 & 10008 & 7972 & 83 & 480 \\
		\bottomrule 
	\end{tabularx}
\end{table}

\begin{figure}[h]
	\centering
	
	\begin{subfigure}[b]{0.49\linewidth} 
		\centering
		\includegraphics[width=\textwidth]{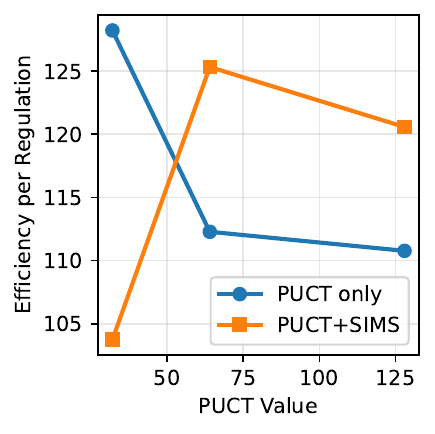}
		\caption{Efficiency per Regulation.}
		\label{fig:puct_efficiency_per_regulation}
	\end{subfigure}
	\hfill 
	%
	\begin{subfigure}[b]{0.49\linewidth} 
		\centering
		\includegraphics[width=\textwidth]{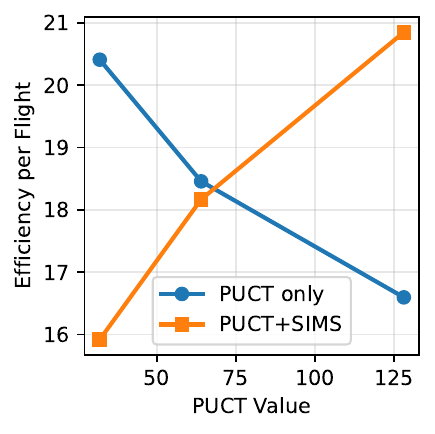}
		\caption{Efficiency per Affected Flight.}
		\label{fig:puct_efficiency_per_flight}
	\end{subfigure}
	
	\caption{Efficiency Metrics' Relationship with PUCT and SIM Values Used by RZ.} 
	\label{fig:puct_efficiency}
\end{figure}

Although the initial outcomes may appear counterintuitive, they can be explained by the randomized nature of exploration in our implementation. Specifically, increasing the PUCT term without proportionally expanding the number of simulations amplifies the variability of rewards, thereby impairing the learning efficiency of RZ. This effect manifests as a monotonic decline in performance across all evaluated metrics, including both the objective value and regulatory efficiency (Figure \ref{fig:puct_efficiency}).

When the increase in the PUCT term is accompanied by an increasing number of simulations, a partial restoration of exploration efficacy can be observed. The additional simulations yield more reliable reward signals, which reflected through the improving trend illustrated in Figures~\ref{fig:puct_efficiency_per_regulation} and~\ref{fig:puct_efficiency_per_flight}.

\subsubsection{Impact of Flow Scoring Factors}
In this section, we study the sensitivity of the two heuristic factors built to guide RZ to select the most promising flows: flow pressure, and the flow's slack. We derived 4 cases and summarized the results in Table \ref{tab:rz_flow_feature}.

\begin{table}[htbp]
	\centering 
	\caption{Comparison of Results for Different Flow Feature Weights. Note that all weights are not normalized.}
	\label{tab:rz_flow_feature}
	\begin{tabularx}{\textwidth}{l *{4}{>{\centering\arraybackslash}X}}
		\toprule 
		\textbf{Case} & \textbf{Final Objective Improvements} & \textbf{Total Delay (min)} & \textbf{Number of Regulations} & \textbf{Number of Flights} \\
		
		\midrule
		
		P=4, S15=0, S30=0 & 9194 & 8756 & 100 & 524 \\
		P=4, S15=0.25, S30=0 & 10730 & 9860 & 93 & 521 \\
		P=6, S15=0.25, S30=0.25 & 11608 & 8842 & 79 & 514 \\
		P=8, S15=0.5, S30=0.5 & 11191 & 8939 & 82 & 512 \\
		
		\bottomrule 
	\end{tabularx}
\end{table}

The overall trends indicate that all three factors are critical for selecting high-quality flows, as the inclusion \textit{Slack15}, and \textit{Slack30} jointly enhances performance by approximately 1500 and 900 points, respectively. The configuration $(P=8, S15=0.5, S30=0.5)$ further reveals that among these features, flow pressure exerts the most pronounced influence: increasing the relative weight of the slack-based features leads to a deterioration in the final results. These findings show that tuning these hyperparameters is a complex process, and we hypothesize that the optimal configuration is likely problem-dependent.

\subsection{Performance Robustness}\label{subsec:perf_robustness}
To examine the replicability of RZ's scalability on larger problem instances, we re-executed RZ under identical settings for two additional operational days of July 18 and 29, 2023. It is important to note that, due to substantial variation in traffic patterns and the number of scheduled flights across these dates, absolute objective improvements are not directly comparable between instances.

\begin{table}[htbp]
	\centering 
	\caption{Comparison of Results for Different Traffic Dates.}
	\label{tab:rz_replica}
	\begin{tabularx}{\textwidth}{l *{5}{>{\centering\arraybackslash}X}}
		\toprule 
		\textbf{Case} & \textbf{Baseline Objective} & \textbf{Final Objective Improvements} & \textbf{Total Delay (min)} & \textbf{Number of Regulations} & \textbf{Number of Flights} \\
		
		\midrule
		
		18/07 3h & 62610 & 23870 & 14060 & 116 & 564 \\
		18/07 6h & 127770 & 40676 & 17224 & 125 & 650 \\
		18/07 24h & 567970 & 79972 & 26308 & 111 & 886 \\
		
		\midrule 
		
		29/07 3h & 151720 & 44299 & 24881 & 128 & 755 \\
		29/07 6h & 240120 & 70583 & 31597 & 137 & 931 \\
		29/07 24h & 758100 & 110520 & 33590 & 106 & 1187 \\
		\bottomrule 
	\end{tabularx}
\end{table}

\begin{figure}
	\centering
	\includegraphics[width=\linewidth]{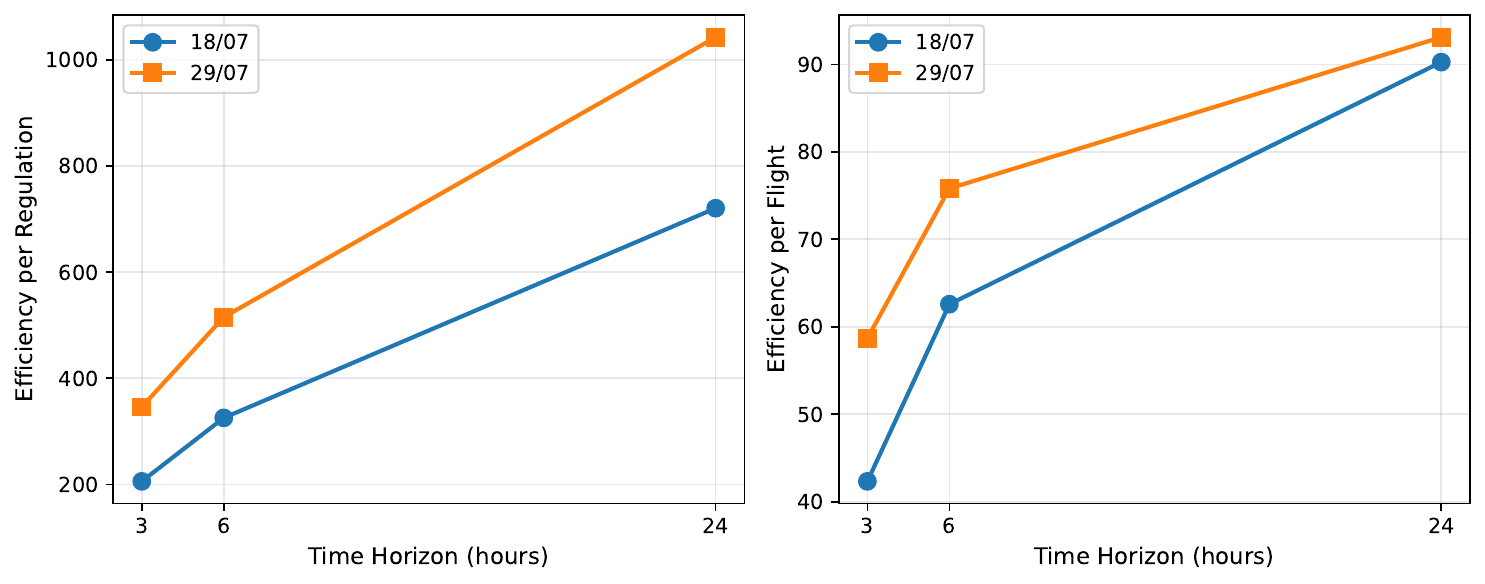}
	\caption{Comparisons between different dates on Efficiency Metrics.}
	\label{fig:multidateefficiencycomparison}
\end{figure}

Table~\ref{tab:rz_replica} and Figure~\ref{fig:multidateefficiencycomparison} confirm that RZ’s scaling efficiency remains consistent across different operational days: both regulatory effectiveness and per-flight delay performance improve with longer planning horizons. Nonetheless, substantial variation in absolute performance is evident between dates, thus prompted our closer inspection on the Cortex platform. We note that not only the DCB imbalance was more severe on July~29, but congestion was notably concentrated within large TFVs overseeing major regions as well. As a prominent example: TFV~LFMWMFDZ, which manages traffic in the  South East of France was overloaded (Figure \ref{fig:lfmw}), leading to a break down of flow pattern extraction in the community detection module. While this is not a failure per-se, this example highlights the need to adapt the graph edge creation threshold value to specific circumstances. A detailed analysis of how such spatial-structural traffic characteristics modulate RZ’s performance is beyond the scope of this paper but remains an important direction for future research.

\begin{figure}
	\centering
	\includegraphics[width=\linewidth]{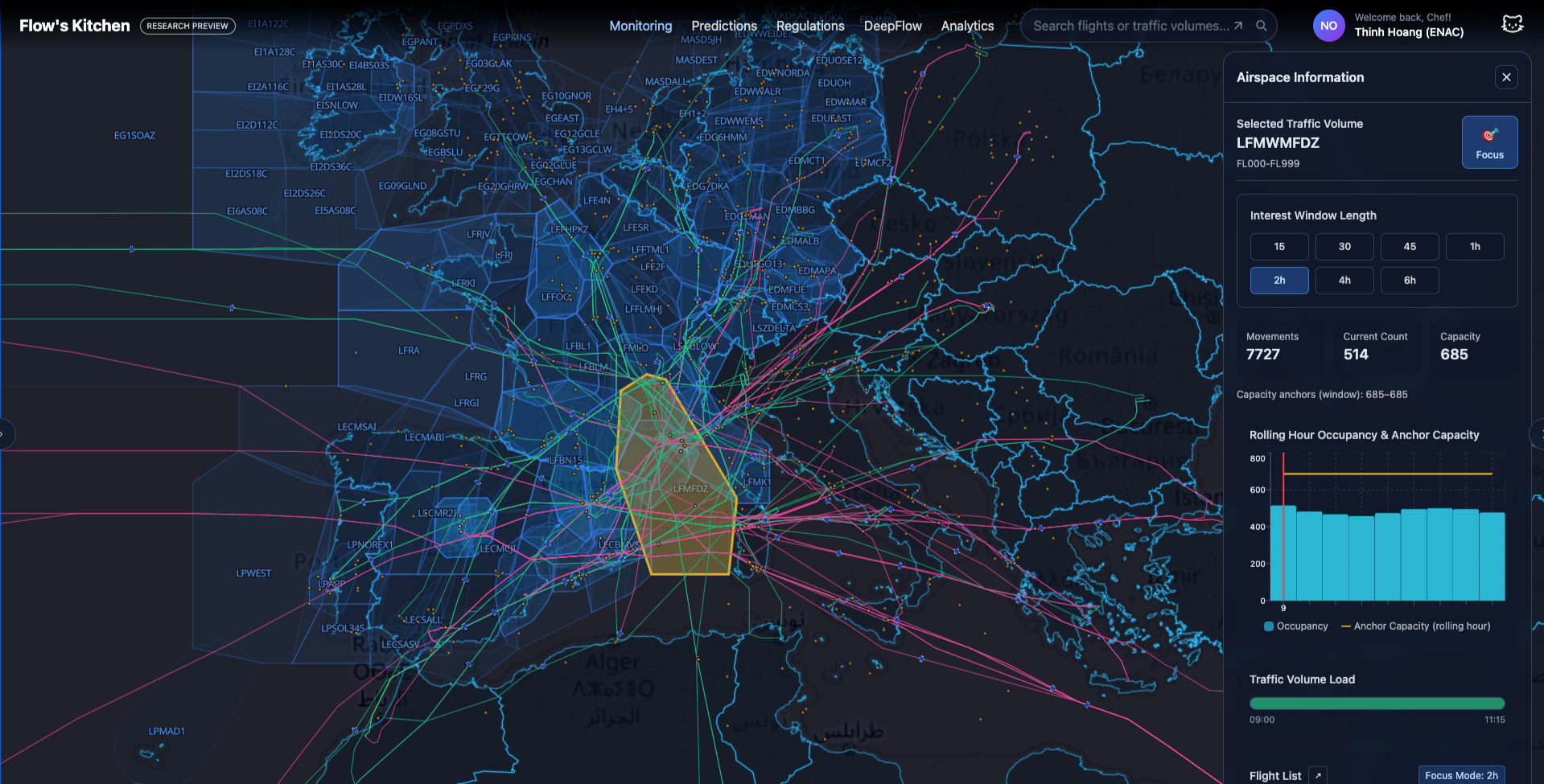}
	\caption{The overloaded traffic volume LFMWMFDZ captures flights that do not have a clear "flow" pattern, thus RZ decided to impose a blanket delay, affecting lots of flights, but unlocked a major exceedance relief.}
	\label{fig:lfmw}
\end{figure}

\subsection{Benchmark against Greedy Policy}
Table \ref{tab:rz_v_greedy} presents a benchmark comparison between RegulationZero and a greedy regulation policy that enforces uniform restrictions by capping the entry rate to the capacity threshold defined by the TFVs. For fairness, the greedy policy was constrained to a maximum of 128 regulations, comparable to RZ's limit of 64 commits, which generally yields between 80 and 120 individual regulations. The comparative outcomes reveal consistently negative improvements across all evaluated instances. This pattern revealed that in absence of a well-formulated regulation strategy, simplistic or blanket interventions merely relocate congestion while paying the cost for delay rather than mitigating it, making this a classic case of Regulation Cascading.

\begin{table}[htbp]
	\centering 
	\caption{Comparison of Results between RZ and Greedy Policy}
	\label{tab:rz_v_greedy}
	\begin{tabularx}{\textwidth}{l *{4}{>{\centering\arraybackslash}X}}
		\toprule 
		\textbf{Case} & \textbf{RZ's Final Objective Improvements} & \textbf{Total Delay (min)} & \textbf{GP's Finial Objective Improvements} & \textbf{Total Delay (min)} \\
		
		\midrule
		
		18/07 & 79972 & 26308 & -1656626 & 204446 \\
		20/07 & 9888 & 8122 & -1330512 & 178462 \\
		29/07 & 110520 & 33590 & -773837 & 163417 \\		
		\bottomrule 
	\end{tabularx}
\end{table}

\section{Discussion}
In this section, we interpret the significance of the results obtained thus far and outline a structured roadmap for advancing RZ toward operational readiness.

\subsection{FPFS/Regulations could still be moderately effective for very large scale problem with constrained computation time budget, given that the regulation plan is sound}
Although it is well established in the literature that slot optimization methods such as SA possseses a substantial advantage in delay efficiency, and our results in fact corroborate this pattern, we find it noteworthy that, with well-structured regulation planning, RZ achieves performance comparable to, and in the large-scale case of whole-day planning, exceeding that of SA. These trends suggest that RZ’s extended regulation sequences can yield higher objective returns at scale, providing greater leverage both per regulation and per affected flight, whereas SA continues to excel in minimizing total delay across problem sizes. The observed convergence in \textit{benefit per delayed flight} for the full-day scenario implies that, once the system grows sufficiently large, the structure of delays (as captured by RZ’s sequential planning) can offset the quantity or magnitude of delays (as optimized by SA). Taken together, these findings appear to align with the spirit of the No-Free-Lunch theorem: no single method consistently dominates across objectives or scales, even within instances characteristic of ATFM. We believe this is an open and important question of the field, and more research is needed to yield a conclusive answer.

\subsection{Realism}
While RZ's solutions are designed for compatibility with existing slot allocation algorithms, several discrepancies remain that must be addressed before the system can achieve certifiable status. Most important among these is the validation of the FIFO queue slot allocation against CASA and RBS++. Another important concern pertains to the treatment of demand and capacity profiles. In typical operational settings, these profiles are defined on a rolling-hour basis with a 20-minute stride; operators, however, have the flexibility to modify these parameters to better suit the temporal resolution of the problem at hand. RZ, by contrast, is constrained to a fixed calculation setting, which limits its adaptability for a wide range of DCB scenarios.

Furthermore, the preliminary identification of TFVs to regulate within RZ is restricted to hotspots only. While this design choice effectively constrains the search space and improves computational tractability, it simultaneously narrows the spectrum of regulatory options during the exploration phase. Finally, the framework's long-term value therefore depends heavily on its ability to extend toward a warm-start capability. Such functionality would enable RZ to operate more effectively in pre-tactical and tactical phases, where CASA/RBS++ CTOT recalculation alone is insufficient. These are non-trivial challenges that warrant substantial future research. Another direction is to extend RZ to plan in face of uncertainty in demands and weather scenarios. Without these enhancements, RZ remains primarily limited to strategic or early pre-tactical planning horizons. 

\subsection{Component Dependency and Expert Knowledge Injection}
RZ contains many constituting subcomponents, each governed by numerous hyperparameters that influence performance in profound ways. Identifying parameter configurations that adapt seamlessly to diverse operational scenarios therefore represents a complex challenge. As observed in Section~\ref{subsec:perf_robustness}, certain scenarios might break the community detection algorithm under dense traffic with highly heterogeneous directions. In principle, dynamically adjusting threshold values for larger TFVs could restore the surgical regulatory moves that RZ routinely demonstrates on smaller TFVs. However, it is not yet known at this moment how it will be achieved concretely.

Despite these challenges, RZ's design offers many promising research directions. Its decision logic operates in the same conceptual space as human operators, making it uniquely amenable to the direct infusion of expert judgment. This capability allows practitioners to embed tacit operational knowledge such as local traffic management practices or context-specific routing rules directly into the tree expansion process. This is in contrast with other methods such as the Mixed-Integer Linear Programming (MILP) formulation, which is often demanding about mathematical problem design and relaxations. We believe this integration of human expertise represents one of the most promising frontiers to be discovered, and that RZ could be a stepping stone in realizing a certifiable decision support system in ATFCM.

\section{Conclusion}
In this paper, we introduced RegulationZero, a sequential planning framework purpose-built to model and manage interacting regulations and their order-dependent cascading effects. By shifting optimization into the regulation space and coupling a hierarchical Monte Carlo Tree Search (MCTS) with a structured proposal engine, RegulationZero produces solutions that remain fully compatible with current and emerging slot-allocation processes (e.g., CASA, RBS++, and Targeted CASA), thereby minimizing integration risk. Crucially, the search operates within the same conceptual domain as flow management experts, making the decision process both transparent and intuitively aligned with operational practice.

Comprehensive experiments on large-scale European traffic confirm that RegulationZero's performance is promising on large-scale problems. It exhibits strong anytime properties, achieves higher objective improvements than a trajectory-space simulated-annealing baseline over extended horizons over a reasonable planning time, and remains selective, impacting only a small portion of flights. The approach generalizes across variable traffic conditions and decisively outperforms a naive rate-capping policy that exhibits the characteristics of regulation cascading. Ablation studies further highlight the contribution of structured exploration and of the flow heuristics in discovering high-leverage regulations.

Several avenues for further research remain open, including rigorous validation against operational systems and real-world data, systematic integration of expert knowledge into the planning process, and a deeper investigation of hyperparameter sensitivities to better exploit inherent traffic patterns.

\appendix
\section{RegulationZero as Regularized Policy Optimization}\label{sec:poliopt}

This appendix demonstrates that the formulation of RegulationZero introduced in Section~\ref{sec:method} can be interpreted as a form of \emph{regularized policy optimization} that jointly employs forward and reverse Kullback-Leibler (KL) divergences to shape both hotspot and regulation proposal selections. For tractability, we focus on the simplified setting where \emph{only one hotspot/expansion attempt is associated with each node in the search tree}, deferring extensions for future research.

\subsection{RegulationZero attempts to track a policy regularized by a forward KL for hotspot selection and reverse KL for proposal selection}
If we define the factorized policy $\pi(h,r|s)=\pi_H(h|s)\pi(r|h,s)$ over the two priors, and the corresponding regularization term as:
\begin{equation}
    \Omega_s(\pi) = \tau_h \kl{\pi_H(\cdot | s)}{P_H(\cdot | s)}+ \expect{h \sim \pi_H}{ \lambda_R (s,h) \kl{P_R(\cdot|h,s)}{\pi_R(\cdot|h,s))}}
\end{equation}

Define a general vector of state backup values $q = q(s,h,r)$, we define the \textit{regularized objective} as:
\begin{equation}\label{eq:gs_problem}
    G_s(q) = \sup_\pi [\expect{h,r \sim \pi}{q(s,h,r) - \Omega_s(\pi)}]
\end{equation}

The following Proposition shows that the solution to $G_s(q)$ has a decomposition form:

\begin{proposition}\label{prop:pi_ref}
For each \(h\), let \(q_h(\cdot) = q(s,h,\cdot)\) and \(\lambda_h = \lambda_R(s,h)\). The inner maximizer \(\bar{\pi}_R(\cdot|h,s)\) and its attained value \(U_h(s)\) solve
\[
\bar{\pi}_R(r|h,s) = \frac{\lambda_h \, P_R(r|h,s)}{\alpha_h - q_h(r)},
\quad
\text{with } \alpha_h > \max_r q_h(r) \text{ chosen s.t. } \sum_r \bar{\pi}_R(r|h,s) = 1,
\]
\[
U_h(s) = \sup_{y} \left[ q_h \cdot y - \lambda_h \, \kl{P_R}{y} \right] = \alpha_h - \lambda_h - \lambda_h \sum_r P_R(r|h,s) \log\!\left(\frac{\alpha_h - q_h(r)}{\lambda_h}\right).
\]
With inclusive inner values \(U_h(s)\),
\[
\bar{\pi}_H(h|s) \propto P_H(h|s) \exp\!\left(U_h(s) / \tau_h\right),
\quad
\text{and}
\quad
G_s(q) = \tau_h \log \sum_h P_H(h|s) \exp\!\left(U_h(s) / \tau_h\right).
\]
Thus \(\bar{\pi}\) is the nested solution
\[
\bar{\pi}_H(h|s) \propto P_H(h|s) \exp\!\left(U_h / \tau_h\right),
\qquad
\bar{\pi}_R(\cdot|h,s) \text{ as above.}
\]
\end{proposition}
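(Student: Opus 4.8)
The plan is to exploit the product form of the policy and the absence of any coupling constraint between $\pi_H$ and the conditionals $\pi_R(\cdot|h,s)$ in order to split the single supremum in \eqref{eq:gs_problem} into a nested pair of concave one-dimensional variational problems, each solvable in closed form. \textbf{Step 1 (decoupling).} First I would expand the objective using $\pi(h,r|s)=\pi_H(h|s)\pi_R(r|h,s)$ and the definition of $\Omega_s$:
\[
\expect{h,r\sim\pi}{q(s,h,r)}-\Omega_s(\pi)=\expect{h\sim\pi_H}{\Bigl(\expect{r\sim\pi_R(\cdot|h,s)}{q_h(r)}-\lambda_h\,\kl{P_R(\cdot|h,s)}{\pi_R(\cdot|h,s)}\Bigr)}-\tau_h\,\kl{\pi_H(\cdot|s)}{P_H(\cdot|s)}.
\]
Because each conditional $\pi_R(\cdot|h,s)$ appears only inside the $h$-expectation and the family $\{\pi_R(\cdot|h,s)\}_h$ consists of free probability vectors, the supremum over $\pi$ separates: one first maximizes each inner bracket over $\pi_R(\cdot|h,s)$, calling its optimal value $U_h(s)$, and then maximizes the resulting outer functional over $\pi_H$. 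With finite action sets this interchange is elementary.

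\textbf{Step 2 (inner problem).} For fixed $h$ I would solve $U_h(s)=\sup_y[\,q_h\cdot y-\lambda_h\,\kl{P_R}{y}\,]$ over the simplex. The map $y\mapsto q_h\cdot y+\lambda_h\sum_r P_R(r)\log y(r)$ is strictly concave (here the standing assumption $P_R(r|h,s)>0$ is exactly what is needed), so its unique maximizer is the stationary point of the Lagrangian, yielding $q_h(r)+\lambda_h P_R(r)/y(r)-\alpha_h=0$, i.e.\ $\bar{\pi}_R(r|h,s)=\lambda_h P_R(r|h,s)/(\alpha_h-q_h(r))$; nonnegativity forces $\alpha_h>\max_r q_h(r)$. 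Existence and uniqueness of such $\alpha_h$ follow from the intermediate value theorem applied to $\alpha\mapsto\sum_r\lambda_h P_R(r)/(\alpha-q_h(r))$, which is continuous and strictly decreasing on $(\max_r q_h(r),\infty)$, diverges at the left endpoint, and vanishes at $+\infty$. Substituting back, the identity $q_h(r)=\alpha_h-(\alpha_h-q_h(r))$ collapses $\sum_r\bar{\pi}_R(r)q_h(r)$ to $\alpha_h-\lambda_h$, while $\kl{P_R}{\bar{\pi}_R}=\sum_r P_R(r)\log\bigl((\alpha_h-q_h(r))/\lambda_h\bigr)$; together these give the claimed formula for $U_h(s)$.

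\textbf{Step 3 (outer problem and conclusion).} Finally $G_s(q)=\sup_{\pi_H}[\,\expect{h\sim\pi_H}{U_h(s)}-\tau_h\,\kl{\pi_H}{P_H}\,]$ is the Gibbs / Donsker--Varadhan variational identity, which I would prove in one line either via a Lagrangian in $\pi_H$ or by rewriting the gap between the objective and its claimed optimum as $\tau_h\,\kl{\pi_H}{\bar{\pi}_H}\ge0$, with equality iff $\pi_H=\bar{\pi}_H$. This gives $\bar{\pi}_H(h|s)\propto P_H(h|s)\exp(U_h(s)/\tau_h)$ and $G_s(q)=\tau_h\log\sum_h P_H(h|s)\exp(U_h(s)/\tau_h)$, which combined with Step 2 is exactly the stated nested representation.

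\textbf{Expected obstacle.} The two Lagrangian computations are routine; the one step requiring genuine care is the well-posedness claim in Step 2 — that the normalizer $\alpha_h$ exists, is unique, and lies strictly above $\max_r q_h(r)$ — since otherwise $\bar{\pi}_R$ and hence $U_h(s)$ are not even defined. The monotonicity argument above settles this under $P_R(\cdot|h,s)>0$. It is also worth emphasizing, as a sanity check, that placing $P_R$ in the first argument of the KL replaces the familiar exponential tilt $\bar{\pi}_R(r)\propto P_R(r)\exp(q_h(r)/\lambda_h)$ by the rational form $\bar{\pi}_R(r)\propto P_R(r)/(\alpha_h-q_h(r))$, a structural feature that distinguishes this scheme from standard entropy-regularized MDP control and reflects the mode-covering character of this term.
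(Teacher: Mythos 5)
Your proof is correct and follows essentially the same route as the paper's: decouple the supremum over the factored policy, solve the inner reverse-KL problem via a Lagrangian stationarity condition to obtain the rational tilt \(\bar{\pi}_R(r|h,s)=\lambda_h P_R(r|h,s)/(\alpha_h-q_h(r))\) and the value \(U_h(s)\), then apply the Gibbs/log-sum-exp identity for the outer forward-KL problem. The only substantive addition is your monotonicity/intermediate-value argument for the existence and uniqueness of the normalizer \(\alpha_h>\max_r q_h(r)\), which the paper's proof takes for granted; that is a worthwhile tightening rather than a genuinely different approach.
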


\begin{proof}
We begin by expanding $G_s(q)$:

\begin{align*}
    G_s(q) &= \sup_\pi [\expect{h,r \sim \pi}{q(s,h,r)] - \Omega_s(\pi)} = \sup_\pi [\sum_{h, r} \pi_H(h|s)\pi_R(r|h,s) q(s,h,r) - \Omega_s(\pi)]\\
    &= \sup_\pi [ \sum_{h, r} \pi_H(h|s) \pi_R(r|h,s) q(s,h,r) - \tau_h \kl{\pi_H(\cdot | s)}{P_H(\cdot | s)} \\
    &- \sum_h \pi_H(h|s) \lambda_R (s,h) \kl{P_R(\cdot|h,s)}{\pi_R(\cdot|h,s))}] \\
    &= \sup_\pi \bigg[ \sum_h \pi_H(h|s) [\sum_r \pi_R(r|h,s) q(s,h,r) - \lambda_R (s,h) \kl{P_R(\cdot|h,s)}{\pi_R(\cdot|h,s))}] \\
    &- \tau_h \kl{\pi_H(\cdot | s)}{P_H(\cdot | s)} \bigg] \\
    &= \sup_{\pi_H} \bigg[ \sum_h \pi_H(h|s) \sup_{\pi_R} [\sum_r \pi_R(r|h,s) q(s,h,r) - \lambda_R (s,h) \kl{P_R(\cdot|h,s)}{\pi_R(\cdot|h,s))}] \\
    &- \tau_h \kl{\pi_H(\cdot | s)}{P_H(\cdot | s)} \bigg]
\end{align*}

We define the inner problem:
$U_h(q_h) = \sup_{y} [q_h^\intercal y - \lambda_R (s,h) \kl{P_R(\cdot|h,s)}{y}]$. Given $\lambda_R (s,h) > 0$, the problem is convex in y, and given the constraint for $y$ to be a valid probability distribution, we could define the Lagrangian:
\begin{equation}
\mathcal{L} = q_h^\intercal y - \lambda_R \kl{P_R}{y} - \alpha (y_r - 1)
\end{equation}

The stationary condition gives:
\begin{equation}\label{eq:inner_loop_sol}
    y^* = \frac{\lambda P_R}{\alpha - q_h}
\end{equation}
and $\alpha$ must solve for:
\begin{equation}
    \sum_r \frac{\lambda P_R(r)}{\alpha - q_h(r)} = 1
\end{equation}
in order to normalize the distribution $y = \pi_R$. The attained value corresponding to $y^*$ is:
\[
U_h(q_h) = q_h^\intercal y^* - \lambda_R \sum_r P_R(r) \log \frac{P_R(r)}{y^*(r)} = \alpha - \lambda_R - \lambda_R \sum_r P_R(r) \log \bigg(\frac{\alpha - q_r}{\lambda}\bigg)
\]

We notice that this expression has the same form as in \cite{grill2020monte}. Now, given that the inner problem attains its maximum at \( U_h(q_h) \) as derived above, the outer optimization problem can be expressed in the following closed form:

\begin{equation}
    \sup_{x \in \Delta(\mathcal{H})} \; x^\top U(q_x) - \tau_h \, \mathrm{KL}(x \, \| \, P_H),
\end{equation}

where \( \Delta(\mathcal{H}) \) represents the probability simplex over \(\mathcal{H}\).

Because this corresponds to the classical maximum-entropy regularized policy optimization formulation \cite{ziebart2010modeling}, the optimal solution admits the familiar log-sum-exp (softmax) form:

\begin{equation}
    x_h^* \propto P_H(h) \, \exp\!\left(\frac{U_h}{\tau_h}\right),
    \qquad
    x^*(h) = 
    \frac{P_H(h) \, \exp\!\left(U_h / \tau_h\right)}
         {\sum_{h' \in \mathcal{H}} P_H(h') \, \exp\!\left(U_{h'} / \tau_h\right)}
\end{equation}
\end{proof}

Proposition \ref{prop:pi_ref} says that the factored policy $\bar{\pi} = \bar{\pi}_R\bar{\pi}_H$ solves the regularized objective maximizing problem $G_s(q)$. The PUCT selection rule (\ref{eq:puct_selection}) was shown in \cite{grill2020monte} to be equal to:
\begin{equation}
    a^* = \arg \max_a \frac{\partial}{\partial n(a)} [q^\intercal y - \lambda_N \kl{\pi_\theta}{\hat{\pi}}]
\end{equation}
which can be viewed as a gradient descent step towards the optimizer of the regularized problem:
\begin{equation}
    \bar{\pi} = q^\intercal y - \lambda_N \kl{\pi_\theta}{\hat{\pi}}
\end{equation}
which in turn has the same solution form $y = \lambda_N \pi_\theta / (\alpha - q)$ as (\ref{eq:inner_loop_sol}). In other words, the inner loop's MCTS attempts to track the policy:
\begin{equation}
    \sup_{y} [q_h^\intercal y - \lambda_R (s,h) \kl{P_R(\cdot|h,s)}{y}]
\end{equation}

Combined with the outer loop's Boltzmann distribution's form, we can conclude that jointly, the RegulationZero framework tracks the solution for the problem $G_s $(\ref{eq:gs_problem}).

\begin{equation*}
    \boxed{\hat{\pi} \approx \bar{\pi} = \arg\max_\pi [\expect{h,r \sim \pi}{q(s,h,r) - \Omega_s(\pi)}]},
\end{equation*}
if the outer loop's hotspot severity proxy function is assumed to track
\begin{equation}
	\phi(s) \approx U(s) = \sup_{y} \left[ q_h \cdot y - \lambda_h \, \kl{P_R}{y} \right].
\end{equation}

\subsection{Sample Complexity Estimation}
Let \(N\) be the total number of root simulations and let \(n_H(h)\) denote the number of times the hotspot
\(h\) is selected at the root, so that \(\sum_{h} n_H(h)=N\).  
We write \(\hat\pi_H,\hat\pi_R\) for RegulationZero respectively.

For any pair \((h,r)\) we have
\begin{align*}
    \bigl|\hat\pi(h,r)-\bar\pi(h,r)\bigr|
        &=\bigl|\hat\pi_H(h)\,\hat\pi_R(r\mid h)-\bar\pi_H(h)\,\bar\pi_R(r\mid h)\bigr|\\
        &\le \bigl|\hat\pi_H(h)-\bar\pi_H(h)\bigr|\;\hat\pi_R(r\mid h)
           +\bar\pi_H(h)\,\bigl|\hat\pi_R(r\mid h)-\bar\pi_R(r\mid h)\bigr|\\
        &\le \bigl|\hat\pi_H(h)-\bar\pi_H(h)\bigr|
           +\bigl|\hat\pi_R(r\mid h)-\bar\pi_R(r\mid h)\bigr|.
\end{align*}
Taking the supremum over all \((h,r)\) yields the key reduction
\begin{equation}
    \|\hat\pi-\bar\pi\|_{\infty}
        \;\le\; \|\hat\pi_H-\bar\pi_H\|_{\infty}
               + \max_{h}\,\|\hat\pi_R(\cdot\mid h)-\bar\pi_R(\cdot\mid h)\|_{\infty}.
    \label{eq:joint-reduction}
\end{equation}

For each hotspot \(h\), assuming that \(\bar\pi_R(\cdot\mid h)\) is locally constant as counts grow,
the reversed-KL tracking bound gives
\begin{equation}
    \|\hat\pi_R(\cdot\mid h)-\bar\pi_R(\cdot\mid h)\|_{\infty}
        \;\le\; \frac{|R|-1}{\,|R|+n_H(h)}.
    \label{eq:inner-bound}
\end{equation}

If the hotspot at each root selection is sampled from \(\bar\pi_H\),
Hoeffding’s inequality together with a union bound implies that, with probability at least
\(1-\delta\),
\begin{equation}
    \|\hat\pi_H-\bar\pi_H\|_{\infty}
        \;\le\; \sqrt{\frac{1}{2N}\,\log\!\frac{2|H|}{\delta}}.
    \label{eq:outer-bound}
\end{equation}

Hence, for any run with total simulations \(N\) and realised gate counts \(n_H(h)\), we obtain,
with probability at least \(1-\delta\),
\begin{equation}
    \boxed{\|\hat\pi-\bar\pi\|_{\infty}
        \;\le\;
          \sqrt{\frac{1}{2N}\,\log\!\frac{2|H|}{\delta}}
          \;+\;
          \max_{h}\frac{|R|-1}{\,|R|+n_H(h)}}
    \label{eq:anytime-bound}
\end{equation}

\section{Sample Regulation Solution Trajectory}
Figure \ref{fig:regulation_trajectory} illustrates a representative regulation trajectory generated by RegulationZero. For clarity, we report only the number of flights rather than the complete set of flight identifiers, and omit the specific time window. Detailed information can be found in the supplementary log files. The top row denotes the identifier of the control volume (or reference location). A zero entry rate indicates that all inbound traffic is held within the window, leading to a bunching effect toward the end of the regulatory period.



\begin{figure}[htbp]
\centering
\begin{tikzpicture}[
  node distance=0.3cm and 0.5cm, scale=0.64, transform shape,
  box/.style={
    rectangle,
    draw=black,
    thick,
    minimum width=3.0cm,
    minimum height=1.4cm,
    align=center,
    font=\small,
    fill=blue!10
  },
  arrow/.style={
    ->,
    >=stealth,
    thick,
    shorten >=2pt,
    shorten <=2pt
  }]
  \node[box] (reg0) {\textbf{EDUFZ44} \\\\ Flights: 53 \\\\ Base: 20.0/h \\\\ New: 17.0/h \textcolor{red}{$\downarrow$}};
  \node[box] (reg1) [right=of reg0] {\textbf{EIDW16SL} \\\\ Flights: 54 \\\\ Base: 22.0/h \\\\ New: 15.0/h \textcolor{red}{$\downarrow$}};
  \node[box] (reg2) [right=of reg1] {\textbf{ENS347S} \\\\ Flights: 19 \\\\ Base: 5.6/h \\\\ New: 1.0/h \textcolor{red}{$\downarrow$}};
  \node[box] (reg3) [right=of reg2] {\textbf{LFMWW} \\\\ Flights: 44 \\\\ Base: 4.0/h \\\\ New: 1.0/h \textcolor{red}{$\downarrow$}};
  \node[box] (reg4) [right=of reg3] {\textbf{LSAZM15} \\\\ Flights: 22 \\\\ Base: 14.0/h \\\\ New: 0.0/h \textcolor{red}{$\downarrow$}};
  \node[box] (reg5) [below=of reg0] {\textbf{ESOSF} \\\\ Flights: 5 \\\\ Base: 3.0/h \\\\ New: 0.0/h \textcolor{red}{$\downarrow$}};
  \node[box] (reg6) [right=of reg5] {\textbf{LEMGANM} \\\\ Flights: 28 \\\\ Base: 2.7/h \\\\ New: 2.0/h \textcolor{red}{$\downarrow$}};
  \node[box] (reg7) [right=of reg6] {\textbf{EI2D216C} \\\\ Flights: 26 \\\\ Base: 5.0/h \\\\ New: 1.0/h \textcolor{red}{$\downarrow$}};
  \node[box] (reg8) [right=of reg7] {\textbf{EISNALL} \\\\ Flights: 53 \\\\ Base: 18.7/h \\\\ New: 16.0/h \textcolor{red}{$\downarrow$}};
  \node[box] (reg9) [right=of reg8] {\textbf{LECMCJU} \\\\ Flights: 23 \\\\ Base: 4.0/h \\\\ New: 0.0/h \textcolor{red}{$\downarrow$}};
  \node[box] (reg10) [below=of reg5] {\textbf{LECSCEN} \\\\ Flights: 27 \\\\ Base: 2.0/h \\\\ New: 0.0/h \textcolor{red}{$\downarrow$}};
  \node[box] (reg11) [right=of reg10] {\textbf{EDMSI} \\\\ Flights: 10 \\\\ Base: 2.0/h \\\\ New: 0.0/h \textcolor{red}{$\downarrow$}};
  \node[box] (reg12) [right=of reg11] {\textbf{EBBUNLW1} \\\\ Flights: 25 \\\\ Base: 15.0/h \\\\ New: 15.0/h \textcolor{gray}{$=$}};
  \node[box] (reg13) [right=of reg12] {\textbf{EDG5PFEI} \\\\ Flights: 20 \\\\ Base: 10.0/h \\\\ New: 10.0/h \textcolor{gray}{$=$}};
  \node[box] (reg14) [right=of reg13] {\textbf{LFFARML1} \\\\ Flights: 44 \\\\ Base: 8.0/h \\\\ New: 2.0/h \textcolor{red}{$\downarrow$}};
  \node[box] (reg15) [below=of reg10] {\textbf{EDMSWA} \\\\ Flights: 4 \\\\ Base: 4.0/h \\\\ New: 0.0/h \textcolor{red}{$\downarrow$}};
  \node[box] (reg16) [right=of reg15] {\textbf{LFFFPAE} \\\\ Flights: 76 \\\\ Base: 26.7/h \\\\ New: 24.0/h \textcolor{red}{$\downarrow$}};
  \node[box] (reg17) [right=of reg16] {\textbf{EI1A128C} \\\\ Flights: 23 \\\\ Base: 8.0/h \\\\ New: 8.0/h \textcolor{gray}{$=$}};
  \node[box] (reg18) [right=of reg17] {\textbf{LECMZUMX} \\\\ Flights: 19 \\\\ Base: 8.0/h \\\\ New: 8.0/h \textcolor{gray}{$=$}};
  \node[box] (reg19) [right=of reg18] {\textbf{EGCLW} \\\\ Flights: 31 \\\\ Base: 8.0/h \\\\ New: 5.0/h \textcolor{red}{$\downarrow$}};
  \node[box] (reg20) [below=of reg15] {\textbf{LECPDWO} \\\\ Flights: 37 \\\\ Base: 4.0/h \\\\ New: 4.0/h \textcolor{gray}{$=$}};
  \node[box] (reg21) [right=of reg20] {\textbf{LECBGO1} \\\\ Flights: 16 \\\\ Base: 4.0/h \\\\ New: 2.0/h \textcolor{red}{$\downarrow$}};
  \node[box] (reg22) [right=of reg21] {\textbf{ESMM8} \\\\ Flights: 11 \\\\ Base: 6.0/h \\\\ New: 4.0/h \textcolor{red}{$\downarrow$}};
  \node[box] (reg23) [right=of reg22] {\textbf{MASB5OH} \\\\ Flights: 41 \\\\ Base: 8.0/h \\\\ New: 5.0/h \textcolor{red}{$\downarrow$}};
  \node[box] (reg24) [right=of reg23] {\textbf{LEBL23FE} \\\\ Flights: 53 \\\\ Base: 28.0/h \\\\ New: 28.0/h \textcolor{gray}{$=$}};
  \node[box] (reg25) [below=of reg20] {\textbf{ESOS179} \\\\ Flights: 18 \\\\ Base: 9.3/h \\\\ New: 9.0/h \textcolor{red}{$\downarrow$}};
  \node[box] (reg26) [right=of reg25] {\textbf{EDUWUR24} \\\\ Flights: 8 \\\\ Base: 5.3/h \\\\ New: 3.0/h \textcolor{red}{$\downarrow$}};
  \node[box] (reg27) [right=of reg26] {\textbf{LFRGA} \\\\ Flights: 10 \\\\ Base: 8.0/h \\\\ New: 4.0/h \textcolor{red}{$\downarrow$}};
  \node[box] (reg28) [right=of reg27] {\textbf{EDWDBDS} \\\\ Flights: 15 \\\\ Base: 28.0/h \\\\ New: 27.0/h \textcolor{red}{$\downarrow$}};
  \node[box] (reg29) [right=of reg28] {\textbf{EISNALL} \\\\ Flights: 81 \\\\ Base: 16.0/h \\\\ New: 13.0/h \textcolor{red}{$\downarrow$}};
  \node[box] (reg30) [below=of reg25] {\textbf{EDUNTM44} \\\\ Flights: 29 \\\\ Base: 6.0/h \\\\ New: 6.0/h \textcolor{gray}{$=$}};
  \node[box] (reg31) [right=of reg30] {\textbf{LFBP1} \\\\ Flights: 43 \\\\ Base: 48.0/h \\\\ New: 46.0/h \textcolor{red}{$\downarrow$}};
  \node[box] (reg32) [right=of reg31] {\textbf{EDUL234} \\\\ Flights: 43 \\\\ Base: 20.0/h \\\\ New: 17.0/h \textcolor{red}{$\downarrow$}};
  \node[box] (reg33) [right=of reg32] {\textbf{LFRRMSI} \\\\ Flights: 27 \\\\ Base: 24.0/h \\\\ New: 23.0/h \textcolor{red}{$\downarrow$}};
  \node[box] (reg34) [right=of reg33] {\textbf{LECBBSU} \\\\ Flights: 6 \\\\ Base: 12.0/h \\\\ New: 6.0/h \textcolor{red}{$\downarrow$}};
  \node[box] (reg35) [below=of reg30] {\textbf{LFMLOLE} \\\\ Flights: 35 \\\\ Base: 4.0/h \\\\ New: 2.0/h \textcolor{red}{$\downarrow$}};
  \node[box] (reg36) [right=of reg35] {\textbf{ENO34} \\\\ Flights: 19 \\\\ Base: 7.0/h \\\\ New: 6.0/h \textcolor{red}{$\downarrow$}};
  \node[box] (reg37) [right=of reg36] {\textbf{EGCLW} \\\\ Flights: 37 \\\\ Base: 12.0/h \\\\ New: 7.0/h \textcolor{red}{$\downarrow$}};
  \node[box] (reg38) [right=of reg37] {\textbf{LECMZGZ} \\\\ Flights: 21 \\\\ Base: 4.0/h \\\\ New: 2.0/h \textcolor{red}{$\downarrow$}};
  \node[box] (reg39) [right=of reg38] {\textbf{MASD5HH} \\\\ Flights: 4 \\\\ Base: 8.0/h \\\\ New: 6.0/h \textcolor{red}{$\downarrow$}};
  \node[box] (reg40) [below=of reg35] {\textbf{MASD3WL} \\\\ Flights: 28 \\\\ Base: 8.0/h \\\\ New: 7.0/h \textcolor{red}{$\downarrow$}};
  \node[box] (reg41) [right=of reg40] {\textbf{LFBL2} \\\\ Flights: 5 \\\\ Base: 4.0/h \\\\ New: 2.0/h \textcolor{red}{$\downarrow$}};

  \draw[arrow] (reg0.east) -- (reg1.west);
  \draw[arrow] (reg1.east) -- (reg2.west);
  \draw[arrow] (reg2.east) -- (reg3.west);
  \draw[arrow] (reg3.east) -- (reg4.west);
  \draw[arrow] (reg5.east) -- (reg6.west);
  \draw[arrow] (reg6.east) -- (reg7.west);
  \draw[arrow] (reg7.east) -- (reg8.west);
  \draw[arrow] (reg8.east) -- (reg9.west);
  \draw[arrow] (reg10.east) -- (reg11.west);
  \draw[arrow] (reg11.east) -- (reg12.west);
  \draw[arrow] (reg12.east) -- (reg13.west);
  \draw[arrow] (reg13.east) -- (reg14.west);
  \draw[arrow] (reg15.east) -- (reg16.west);
  \draw[arrow] (reg16.east) -- (reg17.west);
  \draw[arrow] (reg17.east) -- (reg18.west);
  \draw[arrow] (reg18.east) -- (reg19.west);
  \draw[arrow] (reg20.east) -- (reg21.west);
  \draw[arrow] (reg21.east) -- (reg22.west);
  \draw[arrow] (reg22.east) -- (reg23.west);
  \draw[arrow] (reg23.east) -- (reg24.west);
  \draw[arrow] (reg25.east) -- (reg26.west);
  \draw[arrow] (reg26.east) -- (reg27.west);
  \draw[arrow] (reg27.east) -- (reg28.west);
  \draw[arrow] (reg28.east) -- (reg29.west);
  \draw[arrow] (reg30.east) -- (reg31.west);
  \draw[arrow] (reg31.east) -- (reg32.west);
  \draw[arrow] (reg32.east) -- (reg33.west);
  \draw[arrow] (reg33.east) -- (reg34.west);
  \draw[arrow] (reg35.east) -- (reg36.west);
  \draw[arrow] (reg36.east) -- (reg37.west);
  \draw[arrow] (reg37.east) -- (reg38.west);
  \draw[arrow] (reg38.east) -- (reg39.west);
  \draw[arrow] (reg40.east) -- (reg41.west);
\end{tikzpicture}
\caption{Regulation Solution Trajectory for the 3h case. Each regulation targets a flow like in Figure \ref{fig:flow_x_sample}.}
\label{fig:regulation_trajectory}
\end{figure}
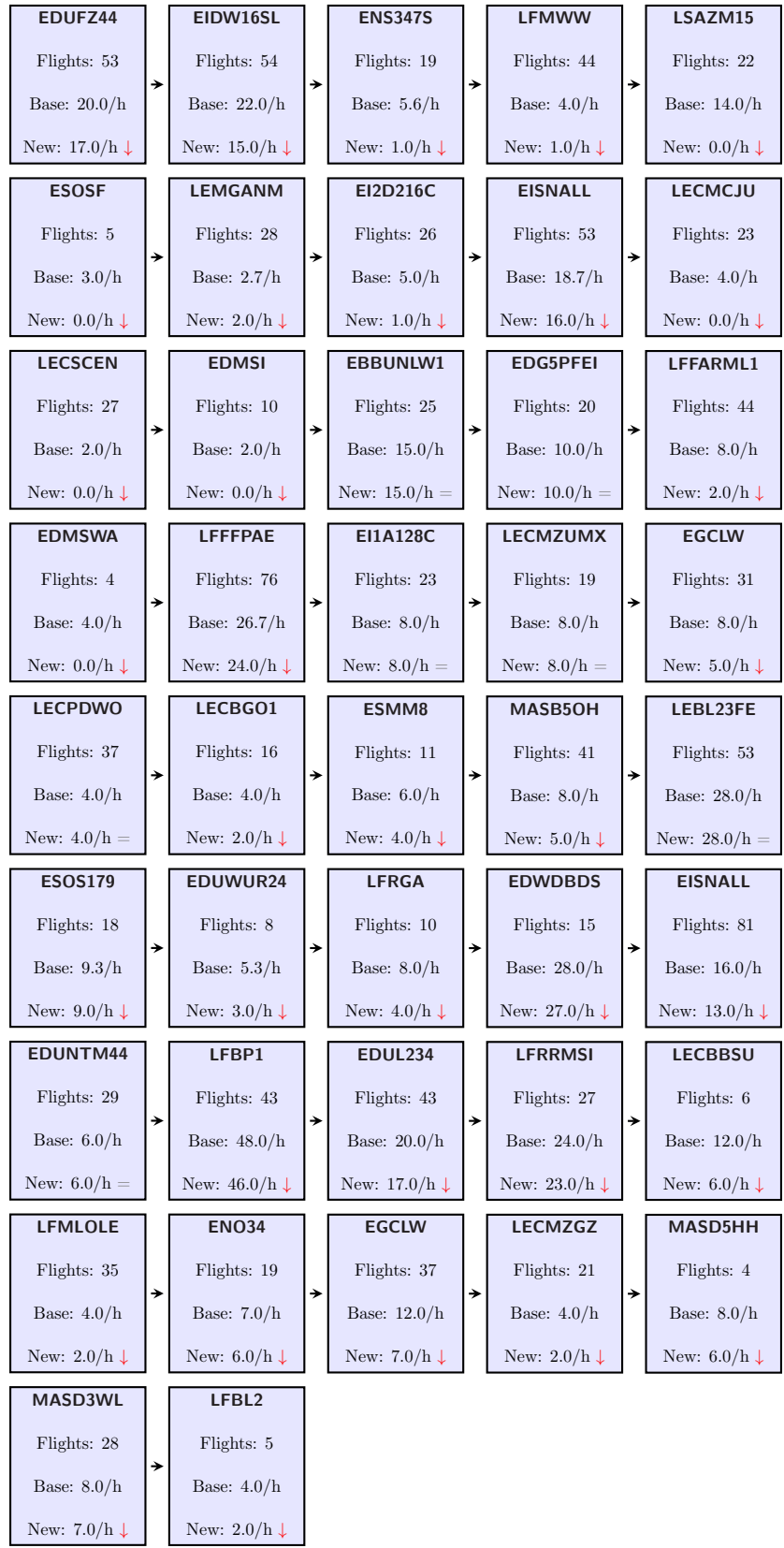

\section*{Data Protection}
The processing pipeline is fully secured with all processing conducted on computer systems belonging to data centers located within the European Union (Romania and Czech Republic).

\section*{Acknowledgments}
This work received funding from the SESAR DeepFlow Project [Grant Agreement ...]. The contents reflect only the authors' views. We are grateful for the discussions with EUROCONTROL Network Manager operators.

\bibliographystyle{ieeetr}
\bibliography{main}

\end{document}